\newtheorem{theorem}{Theorem}[section]
\newtheorem{lemma}[theorem]{Lemma}
\newtheorem{proposition}[theorem]{Proposition}
\theoremstyle{remark}
\newtheorem{remark}{Remark}[section]
\newcommand{\defeq}{\mathrel{\mathop:}=}
\def\paragraph#1{\noindent \textbf{#1}}
\numberwithin{equation}{section}
\def\eee{\mathrm{e}}
\def\<{\langle}
\def\>{\rangle}
\def\R{{\mathbb{R}}}  
\def\N{{\mathbb{N}}}  
\def\P{{\mathbb{P}}}  
\def\Z{{\mathbb{Z}}}
\def\EB{{\mathbf{E}}}
\def \ba {\begin{array}}
\def \ea {\end{array}}
\newcommand{\be}{\begin{equation}}
\newcommand{\ee}{\end{equation}}
\newcommand{\bea}{\begin{eqnarray}}
\newcommand{\eea}{\end{eqnarray}}
\def\TH(#1){\label{#1}}\def\thv(#1){\ref{#1}}
\def\Eq(#1){\label{#1}}\def\eqv(#1){(\ref{#1})}
\def \1{\mathbbm{1}}
\def\eee{\hbox{\rm e}}
\DeclareMathOperator{\diag}{diag}
\DeclareMathOperator{\vek}{vec}
\newcommand{\LKilled}{L_{\mathrm{kill}}}
\newcommand{\sigmaPerturbed}{\widetilde{\sigma}}
\newcommand{\SigmaPerturbed}{\widetilde{\Sigma}}
\newcommand{\uPerturbed}{\widetilde{u}}
\newcommand{\UPerturbed}{\widetilde{U}}
\newcommand{\vPerturbed}{\widetilde{v}}
\newcommand{\VPerturbed}{\widetilde{V}}
\newcommand{\I}{\mathbbm{1}}
\begin{document}

\title{Meeting times of Markov chains via singular value decomposition}
\author[T.M.~van~Belle]{Thomas van Belle}
\address{T.M.~van~Belle\\
Universität Duisburg-Essen\\
Fakultät für Mathematik\\
Thea-Leymann-Str.~9\\
45127 Essen\\
Germany}
\email{thomas.vanbelle@uni-due.de}

\author[A.~Klimovsky]{Anton Klimovsky}
\address{A.~Klimovsky\\
Julius-Maximilians-Universität Würzburg\\
Institut für Mathematik\\
Lehrstuhl für Mathematik VIII: Angewandte Stochastik\\
Emil-Fischer-Str.~30\\
97074 Würzburg\\
Germany
}
\email{anton.klimovsky@mathematik.uni-wuerzburg.de}

\subjclass[2020]{60J10, 60K35, 47A55, 05C81, 05C80, 60B20} 

\keywords{meeting time, normalized Laplacian, generator, Markov chain, random walk, random graph, voter model, Erdős–Rényi, singular value decomposition, spectral theory, random matrix theory, non-asymptotic matrix perturbation theory}

\date{\today}

\begin{abstract}
We suggest a non-asymptotic matrix perturbation-theoretic approach to get sharp bounds on the expected meeting time of random walks on large (possibly random) graphs. We provide a formula for the expected meeting time in terms of the singular value decomposition of the diagonally killed generator of a pair of independent random walks, which we view as a perturbation of the generator. Employing a rank-one approximation of the diagonally killed generator, as the proof of concept, we work out sharp bounds on the expected meeting time of simple random walks on sufficiently dense Erdős–Rényi random graphs.
\end{abstract}

\thanks{
This work was partly supported by the Deutsche Forschungsgemeinschaft (DFG, German Research Foundation) -- Project-IDs 412848929; 443891315.
}

\maketitle

\section{Introduction}

Let $G_n = (V_n := [n], E_n)$, $n \in \N$ be a sequence of (possibly random) undirected graphs. Consider a pair of independent discrete time random walks on $G_n$: $(X^{(n)}_t, Y^{(n)}_t)_{t \in \Z_+}$ started at vertex $i \in V_n$ resp.~vertex $j \in V_n$. How long does it take them to meet?
The \textit{meeting time of two independent random walks} on $G_n$ is a stopping time defined as
\begin{align}
    \label{eq:meeting-time}
    \tau_{\mathrm{meet}}^{(n)}(i,j) = \inf \{ t \in \Z_{\geq 0} : X^{(n)}_t = Y^{(n)}_t, X^{(n)}_0 = i, Y^{(n)}_0 = j \}, \quad i, j \in V_n
\end{align}
(with the natural convention that $\tau_{\mathrm{meet}}^{(n)}(i,j) = \infty$ if the random walks never meet.) Of interest is also the \textit{expected meeting time from stationarity}
\begin{align}\label{eq:meeting-time-stationary}
    t_{\mathrm{meet}}^{\pi} = \EB_{(i,j) \sim \pi \otimes \pi} [\tau_{\mathrm{meet}}^{(n)}(i,j)] = \sum_{i,j \in V_n} \pi _i \pi_j \EB\left[\tau_{\mathrm{meet}}^{(n)} (i,j)\right],
\end{align}
where $\pi$ is the invariant distribution of the random walk transition matrix $P$ and $\EB$ is the expectation w.r.t.~the law of the random walks on $G_n$. Both \eqref{eq:meeting-time} and \eqref{eq:meeting-time-stationary} are key quantities characterizing the behavior of random walks on graphs, see, e.g., \cite{AldousFill1995BookDraft}.

\subsection*{Outline of the article.}  
\label{sec:outline}
In Sections~\ref{sec:meeting-time-via-svd}-\ref{sec:application-ER-random-walk}, we state our main results. In Section~\ref{sec:related-literature}, we point to related literature. In Section~\ref{sec:spectral-formula}, we give proof of the spectral formula for the expected meeting time, and we deduce the rank-$k$ approximation bounds. In Section \ref{sec:matrixperturbation}, we apply a result from matrix perturbation theory to investigate properties of the diagonally killed generator of a pair of independent Markov chains. In Section~\ref{sec:dense-random}, we analyze the spectral formula applied to sufficiently dense Erd\H{o}s-R\'enyi random graphs to deduce sharp non-asymptotic bounds on the expected meeting time from stationarity. In Section~\ref{sec:discussion}, we close with a discussion.

\subsection{Meeting time formula via singular value decomposition}
\label{sec:meeting-time-via-svd}

The first main result is a formula for the expected meeting times of two independent Markov chains both with transition matrix $P$ from any pair of initial states in terms of the singular values decomposition (SVD) of what one might view as the
\textit{``diagonally killed'' generator of a pair of independent Markov chains}
\begin{align}
    \LKilled := I_{n^2} - (P\otimes P)E \in [-1,1]^{n^2 \times n^2},
\end{align}
$I_{n^2} \in \{0, 1\}^{n^2 \times n^2}$ is the unit matrix and $E \in \{0, 1\}^{n^2 \times n^2} $ is a matrix with 
\begin{align}
E_{ij} &:= \mathbbm{1} \{i=j\} \mathbbm{1} \{i\in \mathcal{D}\}, \quad i,j \in [n^2],
\end{align}
where $\mathcal{D} = \{1,n+2,2n+3,\dots,n^2\}$ and $\mathbbm{1}$ denotes the indicator function.
\begin{remark}[Meaning of $E$]
Let $C \in \R^{n^2 \times n^2}$ be a matrix. Then, the multiplication of $C$ by $E$, e.g., from the right zeros out $n$ columns of the matrix $C$, which exactly correspond to the diagonal $\{ (i,i) \in V \times V : i \in V \}$ in the product state space. Thus, $E$ encodes the meeting event. Note that $(P\otimes P) E$ is therefore a sub-stochastic matrix.
\end{remark}
In order to better understand the Kronecker product structure of matrices, we define an ``array flattening'' mapping $f\colon [n]^2 \to [n^2]$ by 
\begin{equation}
[n]^2 \ni (k,\ell) \overset{f}{\mapsto} (k-1)n + \ell \in [n^2].
\end{equation}
We will slightly abuse the notation and write as an index $(k,\ell)$ instead of $f(k,\ell)$. We choose this notation since it allows us to concisely write
\begin{equation}
    \left(P\otimes P\right)_{(i,j), (k,\ell)} = P_{ik} P_{j\ell}.
\end{equation}
Furthermore, the matrix $E$ can be written as
\begin{align}
\left(E\right)_{(i,j),(k,\ell)} &:= \I \{(i,j) = (k,\ell)\} \I \{k\neq\ell\}, \quad i,j,k,\ell \in [n].
\end{align}
For a matrix $C \in \R^{n\times n}$, we denote by $\vek(C)\in\R^{n^2}$ the vector such that
\begin{equation}
    \vek(C)_{(i,j)} = C_{ij}.
\end{equation}
In what follows, we equip $\R^n$ with the Euclidean ($\ell^2$-norm) $\Vert \cdot \Vert$ which induces the $2$-$2$ operator norm on the space of matrices, which we also denote by $\Vert \cdot \Vert$. 
Now we are in position to state the first main technical result.
\begin{proposition}[Expected meeting times via SVD of the diagonally killed generator] \label{prop:SVD}
Let $P$ the transition matrix of an irreducible Markov chain. Let $\sigmaPerturbed_1, \sigmaPerturbed_2, \ldots, \sigmaPerturbed_{n^2}$ be
the singular values (with multiplicities) of $\LKilled$ ordered non-increasingly:
\begin{align}
\sigmaPerturbed_1 \geq \sigmaPerturbed_2 \geq \ldots \geq \sigmaPerturbed_{n^2} > 0.
\end{align}
Let $\uPerturbed_{i}$ and $\vPerturbed_{i}$ be the corresponding left and right singular vectors, $i \in [n^2]$. Then,
\begin{align}
\label{eq:expected-meeting-matrix-formula}
    \vek(\EB[\tau_{\mathrm{meet}}^{(n)}(i,j)]_{i,j \in [n]}) = E(I_{n^2} - (P \otimes P) E)^{-1} \mathbf{\underline{1}}_{n^2},
\end{align}
and in particular
\begin{align}\label{eq:expected-meeting-matrix}
t_{\mathrm{meet}}^{\pi} &= \left(\pi\otimes\pi\right)^t E(I_{n^2} - (P \otimes P) E)^{-1} \mathbf{\underline{1}}_{n^2} \\
&= -1 + \sum_{i = 1}^{n^2} \frac{1}{\sigmaPerturbed_{i}} (\pi \otimes \pi)^t \vPerturbed_{i} \uPerturbed_{i}^t \mathbf{\underline{1}}_{n^2}, 
\label{eq:expected-meeting-spectral}
\end{align}
where $\mathbf{\underline{1}}_{n^2}$ is the $n^2$-dimensional vector with $1$ at all coordinates, $\pi$ is the stationary distribution of $P$. 
\end{proposition}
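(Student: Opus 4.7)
The plan is to recognize $Q := (P\otimes P) E$ as the sub-stochastic one-step kernel of the product Markov chain $((X^{(n)}_t, Y^{(n)}_t))_{t \in \Z_+}$ on $[n]^2$ killed the instant it hits the diagonal $\mathcal{D}$. Since right-multiplication by $E$ zeros the columns indexed by $\mathcal{D}$, one reads off $Q_{(i,j),(k,\ell)} = P_{ik}P_{j\ell}\I\{k \neq \ell\}$, which is exactly the one-step kernel of the pair chain with absorption on $\mathcal{D}$. Iterating and summing,
\begin{equation*}
  [(I_{n^2}-Q)^{-1}\mathbf{\underline{1}}_{n^2}]_{(i,j)} \;=\; \sum_{t=0}^{\infty} [Q^t \mathbf{\underline{1}}_{n^2}]_{(i,j)} \;=\; \sum_{t=0}^{\infty} \PB_{(i,j)}(\tau_{\mathrm{meet}}^{(n)} > t) \;=\; \EB[\tau_{\mathrm{meet}}^{(n)}(i,j)]
\end{equation*}
for $(i,j) \notin \mathcal{D}$. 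Since $\tau_{\mathrm{meet}}^{(n)}(i,i) = 0$ and left-multiplication by $E$ zeros out precisely the diagonal coordinates, this already gives \eqref{eq:expected-meeting-matrix-formula}.

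Next I would derive the stationary formula from the identity
\begin{equation*}
  (\pi\otimes\pi)^t Q \;=\; (\pi\otimes\pi)^t (P\otimes P) E \;=\; (\pi\otimes\pi)^t E,
\end{equation*}
which is an immediate consequence of the stationarity $\pi^t P = \pi^t$. Setting $D := I_{n^2} - E$ for the projector onto $\mathcal{D}$, this rearranges to $(\pi\otimes\pi)^t D = (\pi\otimes\pi)^t(I_{n^2} - Q)$, so after multiplying by $(I_{n^2}-Q)^{-1} \mathbf{\underline{1}}_{n^2}$,
\begin{equation*}
  (\pi\otimes\pi)^t D (I_{n^2}-Q)^{-1} \mathbf{\underline{1}}_{n^2} \;=\; (\pi\otimes\pi)^t \mathbf{\underline{1}}_{n^2} \;=\; 1.
\end{equation*}
Substituting $E = I_{n^2} - D$ into \eqref{eq:expected-meeting-matrix} turns its right-hand side into $(\pi\otimes\pi)^t (I_{n^2}-Q)^{-1} \mathbf{\underline{1}}_{n^2} - 1$. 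Inserting the SVD expansion $\LKilled^{-1} = \sum_{i=1}^{n^2} \sigmaPerturbed_i^{-1} \vPerturbed_i \uPerturbed_i^t$ then produces~\eqref{eq:expected-meeting-spectral}.

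Both steps are essentially routine once the interpretation of $Q$ as a killed kernel is in place. The only genuinely delicate point is the invertibility of $\LKilled = I_{n^2}-Q$ used in the Neumann series and in the SVD inversion. This is equivalent to transience of the killed pair chain, i.e.\ to the two walks meeting almost surely from every starting pair. Irreducibility of $P$ alone is not sufficient (a bipartite $P$ prevents meeting across color classes), so aperiodicity must be added; the hypothesis $\sigmaPerturbed_{n^2} > 0$ built into the SVD encodes this, and I would simply flag it as a prerequisite rather than prove it independently.
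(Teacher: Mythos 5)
Your argument is correct and lands on exactly the same formulas, but arrives there by a slightly cleaner probabilistic route. Where the paper writes down a first-step recursion for $M_{ij} = \EB[\tau_{\mathrm{meet}}^{(n)}(i,j)]$, repackages it into the matrix equation $M = \mathbf{\underline{1}}_n\mathbf{\underline{1}}_n^t + P(M - M_d)P^t$, and rearranges to obtain $\vek(M) = \LKilled^{-1}\mathbf{\underline{1}}_{n^2}$, you read $(P\otimes P)E$ directly as the one-step kernel killed on the diagonal and invoke the tail-sum identity $\EB[\tau]=\sum_{t\geq 0}\PB(\tau>t)$ together with the Neumann series; these are the same computation in different clothing, and both handle the diagonal entries (where $\EB[\tau]=0$ but the formal solution is nonzero) by left-multiplying with $E$. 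Likewise for the stationary formula: the paper shows $\pi^t M_d\pi = 1$ by feeding the matrix equation into itself, while you establish the equivalent fact $(\pi\otimes\pi)^t(I_{n^2}-E)\LKilled^{-1}\mathbf{\underline{1}}_{n^2}=1$ directly from the stationarity identity $(\pi\otimes\pi)^t(I_{n^2}-E)=(\pi\otimes\pi)^t\LKilled$; materially the same step. The one place you genuinely add something is the closing remark: you correctly point out that irreducibility of $P$ alone does not make $\LKilled$ invertible (two independent walks on a bipartite graph started on opposite sides never meet), so the blanket assertion $\sigmaPerturbed_{n^2}>0$ in the statement implicitly requires an aperiodicity-type hypothesis on the pair chain; the paper passes over this silently, so flagging it is a worthwhile refinement.
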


The second main technical result is the following non-asymptotic rank-$k$ approximation bound for the expected meeting time from stationarity.

\begin{proposition}[Rank-$k$ approximation of the meeting time]\label{lem:rankapprox}
Let $k \in [n^2]$, and define the \textbf{rank-k approximation of the expected meeting time} as
    \begin{align}
    \label{eq:rank-k-approximation}
       \hat{\text{t}}_{\mathrm{meet}}^{\pi, (k)} := -1 
        +\sum_{i = n^2 - k + 1}^{n^2} \frac{1}{\sigmaPerturbed_{i}} (\pi\otimes\pi)^t \vPerturbed_{i} \uPerturbed_{i}^t \mathbf{\underline{1}}_{n^2}.
    \end{align}
    Then, the expected meeting time can be approximated by $\hat{\text{t}}_{\mathrm{meet}}^{\pi, (k)}$:
    \begin{equation}
        \label{eq:rank-k-error-bound}
        \left|\hat{\text{t}}_{\mathrm{meet}}^{\pi, (k)} - t^{\pi}_\mathrm{meet}\right| \leq \frac{n\Vert\pi\Vert^2}{\sigmaPerturbed_{n^2 - k}}.
    \end{equation}
\end{proposition}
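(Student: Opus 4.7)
The plan is to subtract the rank-$k$ approximation from the exact spectral formula \eqref{eq:expected-meeting-spectral} and then control the tail by a single Cauchy--Schwarz step together with the orthonormality of the singular vectors. Since the singular values are ordered non-increasingly, the truncation \eqref{eq:rank-k-approximation} retains the $k+1$ indices $i \in \{n^2-k, \ldots, n^2\}$ carrying the smallest $\tilde\sigma_i$ (and hence the largest reciprocals). Consequently,
\begin{equation*}
    t^{\pi}_{\mathrm{meet}} - \hat{\text{t}}^{\pi,(k)}_{\mathrm{meet}} = \sum_{i=1}^{n^2-k-1} \frac{1}{\tilde\sigma_i} \, (\pi\otimes\pi)^t \tilde v_i \, \tilde u_i^t \mathbf{\underline{1}}_{n^2},
\end{equation*}
a finite sum of $n^2-k-1$ scalar contributions, with the $-1$ constants cancelling.

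Next I would pull the worst reciprocal out of the sum by the monotonicity $1/\tilde\sigma_i \leq 1/\tilde\sigma_{n^2-k-1}$ for all $i \leq n^2-k-1$, and apply the triangle inequality and then Cauchy--Schwarz with the splitting $\alpha_i := (\pi\otimes\pi)^t \tilde v_i$ and $\beta_i := \tilde u_i^t \mathbf{\underline{1}}_{n^2}$:
\begin{equation*}
    \left|t^{\pi}_{\mathrm{meet}} - \hat{\text{t}}^{\pi,(k)}_{\mathrm{meet}}\right| \leq \frac{1}{\tilde\sigma_{n^2-k-1}} \sqrt{\sum_{i=1}^{n^2-k-1} \alpha_i^2} \cdot \sqrt{\sum_{i=1}^{n^2-k-1} \beta_i^2}.
\end{equation*}

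Since $\{\tilde v_i\}$ and $\{\tilde u_i\}$ are orthonormal bases of $\R^{n^2}$, Bessel's inequality (or Parseval) bounds the two partial sums by the squared norms of $\pi\otimes\pi$ and $\mathbf{\underline{1}}_{n^2}$ respectively. A direct computation gives $\Vert \pi\otimes\pi\Vert^2 = \Vert\pi\Vert^4$ (by the product structure of the entries $\pi_i\pi_j$) and $\Vert\mathbf{\underline{1}}_{n^2}\Vert^2 = n^2$. Substituting these back yields
\begin{equation*}
    \left|t^{\pi}_{\mathrm{meet}} - \hat{\text{t}}^{\pi,(k)}_{\mathrm{meet}}\right| \leq \frac{1}{\tilde\sigma_{n^2-k-1}} \cdot \Vert\pi\Vert^2 \cdot n,
\end{equation*}
which is exactly \eqref{eq:rank-k-error-bound}. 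There is no genuine obstacle in this argument; the only thing to be careful about is the indexing convention, namely that the bound is driven by the $(k+1)$-th smallest singular value (index $n^2-k-1$), i.e.~the largest singular value that is \emph{discarded} by the truncation.
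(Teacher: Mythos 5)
Your proof is correct, and it follows the same decomposition as the paper: both split off the first $n^2-k-1$ terms of the spectral sum and bound the resulting tail by $\Vert\pi\otimes\pi\Vert\cdot\Vert\mathbf{\underline{1}}_{n^2}\Vert/\sigmaPerturbed_{n^2-k-1}$. The only difference is in how that operator-norm bound is reached: the paper bounds $\bigl\Vert\sum_{i\le n^2-k-1}\sigmaPerturbed_i^{-1}\vPerturbed_i\uPerturbed_i^t\bigr\Vert$ directly (citing Eckart--Young--Mirsky), whereas you replace that step by an elementary chain of triangle inequality, monotonicity of the ordered singular values, Cauchy--Schwarz, and Bessel. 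Your route is slightly more hands-on and avoids invoking any named theorem, but it is mathematically equivalent and yields the identical constant; your remark on the indexing (the bound is driven by the largest singular value discarded by the truncation, index $n^2-k-1$) is exactly the right point to be careful about and matches the paper.
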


\begin{remark}
In ``mean-field-like'' graphs, in which $\pi$ is close to the uniform distribution, we get $\Vert \pi \Vert^2 = \Theta\left(\frac{1}{n}\right)$. So the l.h.s.~of \eqref{eq:rank-k-error-bound} reduces to $\Theta\left(\frac{1}{\sigmaPerturbed_{n^2 - k}}\right)$.
\end{remark}

\subsection{A naive matrix perturbation theoretic approach to computations with diagonally killed generator}\label{sec:naiveperturbation}
To apply Propositions~\ref{prop:SVD} and \ref{lem:rankapprox}, it can be useful to view $\LKilled$ as a perturbation of the \textit{generator of a pair of independent random walks}
\begin{align}
\label{eq:generator-p-p}
    L :=  I_{n^2} - (P\otimes P)
\end{align}
to control the r.h.s. of \eqref{eq:expected-meeting-spectral}. We denote the singular values (with multiplicities) of $L$ by 
\begin{equation}
\label{eq:unperturbed-singular-values}
    \sigma_1 \geq \sigma_2\geq\dots\geq\sigma_{n^2-1} > \sigma_{n^2} = 0.
\end{equation}
The corresponding left and right singular vectors are denoted by $u_i$ and $v_i$ respectively. A naive application of the non-asymptotic second order perturbation for the least singular value \cite{stewart1984second} yields
\begin{equation}\label{eq:perturbationexpansion}
     \sigmaPerturbed_{n^2}^2 = \sigma_{n^2}^2\left(\LKilled\right) = \gamma_{11}^2 + O\left(\Vert (P\otimes P)(E - I) \Vert^3\right),
\end{equation}
where $\gamma_{11} = u_{n^2}^t (P\otimes P)(E - I) v_{n^2}$. Note that the vectors $u_{n^2}$ and $v_{n^2}$ are given by
\begin{equation}\label{eq:unperturbedvectors}
    u_{n^2} = \frac{\pi\otimes\pi}{\Vert\pi\otimes\pi\Vert} \quad \text{and} \quad v_{n^2} = \frac{\mathbf{\underline{1}}_{n^2}}{\Vert\mathbf{\underline{1}}_{n^2}\Vert}.
\end{equation}
The expansion \eqref{eq:perturbationexpansion} can further be simplified by noting that universally, i.e., for any graph, 
\begin{equation}\label{eq:gamma11computation}
    \gamma_{11} = \frac{(\pi\otimes\pi)^t(P\otimes P)(E - I)\mathbf{\underline{1}}_{n^2}}{\Vert\pi\otimes\pi\Vert\Vert\mathbf{\underline{1}}_{n^2}\Vert} = \frac{(\pi\otimes\pi)^t(E-I)\mathbf{\underline{1}}_{n^2}}{\Vert\pi\otimes\pi\Vert\Vert\mathbf{\underline{1}}_{n^2}\Vert} = -\frac{1}{n}.
\end{equation}
This naive application already fails even for the complete graph: a simple computation shows that the norm of the perturbation $\Vert\left(P\otimes P\right)(E-I)\Vert$ is of order $n^{-1/2}$. Even though \eqref{eq:perturbationexpansion} cannot be applied directly, it does show that the least singular value of $\LKilled$ is shifted by $\frac{1}{n}$ plus some graph-dependent terms. As we show below, see Section~\ref{sec:matrixperturbation}, it is possible to upgrade our understanding of the graph-dependent terms to make the method applicable to sufficiently dense Erdős–Rényi random graphs. This yields rather sharp non-asymptotic bounds on \eqref{eq:expected-meeting-spectral}, see Theorem~\ref{thm:meetingtime}. In particular, the approximation 
\begin{align}
\label{eq:meeting-via-gamma-approximation}
t_{\mathrm{meet}}^{\pi} \approx  \frac{1}{|\gamma_{11}|} = n
\end{align}
turns out to be valid with high probability.

\begin{remark}
\label{rem:meeting-random-regular-graph}
A word of caution: It is known that the meeting time is \textit{not} universally asymptotically equivalent to $n$ (i.e., with proportionality constant $1$), cf., e.g., the random regular graph \cite{chen2021precise} for another proportionality constant than $1$ (but the same order $\Theta(n)$). Moreover, the meeting time must not be $\Theta(n)$ at all. It can be, e.g., $\Theta(1)$ like it is on star graphs, or $\Theta(n^2)$ like it is on the cycle graph, cf.~\cite[Table~1]{kanade_coalescence_2018}.
\end{remark}

\subsection{Meeting times of random walks on sufficiently dense Erdős–Rényi random graphs}
\label{sec:application-ER-random-walk}

In this section, we report on a rigorous proof-of-concept implementation of the idea for sufficiently dense Erdős–Rényi random graphs. Our result is non-asymptotic and identifies the exact proportionality constant. This constitutes our third main result.

Recall that the Erdős–Rényi random graph $\mathcal{G}(n,p)$ where $p \in (0,1)$, $n \in \N$ is a random graph on $V := [n]$ such that the symmetric adjacency matrix $A = (a_{i, j})_{i,j \in [n]}$ has independent Bernoulli$(p)$ distributed entries $(a_{i,j})_{i < j}$ and $a_{i,i} = 0$, $i \in V$. In what follows, we denote by $\P$ the law of the Erd\H{o}s-R\'enyi random graph.

\begin{theorem}[Meeting time for sufficiently dense Erdős–Rényi graphs]\label{thm:meetingtime}
    Consider two independent simple random walks on Erdős–Rényi random graph $\mathcal{G}(n, p_n)$, where the edge probability $p_n = cn^{\beta - 1} \wedge 1$ with $c > 0$ arbitrary but fixed. Let $\beta > \frac{1}{2}$. Then, for all $\epsilon > 0$, there exist $N \in \N$ and constants $\nu_1, \nu_2, \theta > 0$ such that for all $n\geq N$,
\begin{multline}
    \label{eq:meetingtimeconvergence}
        \P\left(\bigg|\frac{1}{n}t_{\mathrm{meet}}^{\pi} - 1 \bigg| > \epsilon \right)
        \leq 2n\exp\left(- \frac{\nu_1^2 d}{3}\right) + 2\binom{n}{2}\exp\left(- \frac{\nu_2^2 d^2}{3n}\right)
     + \eee^{-\theta \left(\log n\right)^{2}},
\end{multline}
where $d := np_n$ is the mean degree.
In particular, the r.h.s.~\eqref{eq:meetingtimeconvergence} converges to zero, as $n \to \infty$.
\end{theorem}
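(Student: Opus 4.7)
The plan is to combine the exact spectral formula \eqref{eq:expected-meeting-spectral} of Proposition~\ref{prop:SVD} with the rank-$k$ truncation bound of Proposition~\ref{lem:rankapprox}, the refined matrix perturbation analysis developed in Section~\ref{sec:matrixperturbation}, and standard concentration-of-measure estimates for the Erd\H{o}s--R\'enyi random graph. As a first step, I would apply Proposition~\ref{lem:rankapprox} with $k = 1$, retaining only the two smallest singular values $\sigmaPerturbed_{n^2 - 1}$ and $\sigmaPerturbed_{n^2}$ of $\LKilled$. On the high-probability event that the adjacency spectrum of $\mathcal{G}(n, p_n)$ concentrates and all degrees lie close to $d = np_n$, the unperturbed operator $L = I_{n^2} - P \otimes P$ inherits a $\Theta(1)$ spectral gap from $P$; the perturbation, although not small in operator norm, shifts the larger singular values only slightly, so $\sigmaPerturbed_{n^2 - 2} = \Theta(1)$. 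Combined with $\Vert \pi \Vert^2 = \Theta(1/n)$ (a consequence of degree concentration via $\pi_i = d_i / (2|E_n|)$), the truncation error $n \Vert \pi \Vert^2 / \sigmaPerturbed_{n^2 - 2}$ from \eqref{eq:rank-k-error-bound} is $O(1) = o(n)$, negligible against the target $t_{\mathrm{meet}}^{\pi} \asymp n$.

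Next, I would invoke the refined perturbation analysis from Section~\ref{sec:matrixperturbation} to establish the sharp asymptotics $\sigmaPerturbed_{n^2} = \tfrac{1}{n}(1 + o(1))$ together with the singular-vector approximations $\vPerturbed_{n^2} \approx v_{n^2} = n^{-1} \mathbf{\underline{1}}_{n^2}$ and $\uPerturbed_{n^2} \approx u_{n^2} = (\pi \otimes \pi) / \Vert \pi \Vert^2$, with explicit quantitative error control. Substituting into the $i = n^2$ term of \eqref{eq:expected-meeting-spectral} and using $(\pi \otimes \pi)^t \mathbf{\underline{1}}_{n^2} = 1$ yields
\begin{equation*}
\frac{1}{\sigmaPerturbed_{n^2}} (\pi \otimes \pi)^t \vPerturbed_{n^2} \uPerturbed_{n^2}^t \mathbf{\underline{1}}_{n^2} \;\approx\; n \cdot \tfrac{1}{n} \cdot \tfrac{1}{\Vert \pi \Vert^2} \;=\; \tfrac{1}{\Vert \pi \Vert^2} \;\approx\; n.
\end{equation*}
The $i = n^2 - 1$ contribution is then shown to be $o(n)$ by observing that either $\vPerturbed_{n^2 - 1}$ is nearly orthogonal to $\pi \otimes \pi$ or $\uPerturbed_{n^2 - 1}$ is nearly orthogonal to $\mathbf{\underline{1}}_{n^2}$, a consequence of the orthogonality of the unperturbed singular vectors \eqref{eq:unperturbedvectors} together with Davis--Kahan-type singular-vector perturbation bounds.

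The deterministic spectral estimates above are then converted into the probabilistic tail \eqref{eq:meetingtimeconvergence} by three standard concentration tools for $\mathcal{G}(n, p_n)$: a Chernoff bound on each binomial degree $d_v \sim \mathrm{Bin}(n - 1, p_n)$ controls the deviation of $\pi$ from the uniform distribution and delivers the term $2n \exp(-\nu_1^2 d / 3)$; a Bernstein/Chernoff bound on pairwise sums of the form $\sum_k a_{ik} a_{jk}$ (which enter the off-diagonal entries of $P \otimes P$ and hence the graph-dependent corrections to $\gamma_{11}$ from Section~\ref{sec:naiveperturbation}) delivers the term $2 \binom{n}{2} \exp(-\nu_2^2 d^2 / (3n))$; and a random matrix inequality of Feige--Ofek / Le--Levina--Vershynin type for the normalized adjacency matrix of $\mathcal{G}(n, p_n)$ supplies the spectral-gap lower bound $\sigmaPerturbed_{n^2 - 2} = \Theta(1)$ together with the singular-vector perturbation bounds, contributing the $\eee^{-\theta (\log n)^2}$ term.

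The main obstacle is the quantitative control in the second step: as already noted after \eqref{eq:gamma11computation}, the perturbation $(P \otimes P)(I - E) = \LKilled - L$ has operator norm of order $n^{-1/2}$, which is much larger than the target scale $\sigmaPerturbed_{n^2} = \Theta(1/n)$, so the naive second-order perturbation expansion \eqref{eq:perturbationexpansion} is inadequate. Overcoming this requires simultaneously exploiting the low-rank structure of $I - E$ (supported on the $n$-dimensional diagonal embedding of $V$ in $V \times V$), the near-uniformity of the unperturbed singular vectors \eqref{eq:unperturbedvectors}, and the $\Theta(1)$ spectral gap of $L$ off its kernel, via a tailored block / Schur-complement-style perturbation identity --- the technical content delivered by Section~\ref{sec:matrixperturbation}.
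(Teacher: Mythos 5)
Your proposal follows essentially the same route as the paper: a rank-one truncation of the SVD formula of Propositions~\ref{prop:SVD}--\ref{lem:rankapprox}, a Stewart-type block/Schur-complement perturbation identity (the content of Section~\ref{sec:matrixperturbation}, Theorem~\ref{thm:Stewart-perturbation}) to pin down $\sigmaPerturbed_{n^2}$ at the fine scale $1/n$ beyond the reach of the naive expansion \eqref{eq:perturbationexpansion}, and Chernoff/union bounds for degrees, codegree sums, and the adjacency spectrum yielding the three error terms in \eqref{eq:meetingtimeconvergence}. Two small remarks: the $i=n^2-1$ contribution you propose to control via Davis--Kahan is already $O(1)$ by Cauchy--Schwarz alone, since $\Vert\pi\otimes\pi\Vert=\Vert\pi\Vert^2\asymp 1/n$ and $\Vert\mathbf{\underline{1}}_{n^2}\Vert=n$ while $\sigmaPerturbed_{n^2-1}=\Theta(1)$, so no singular-vector perturbation argument is needed there; and to obtain the stated super-polynomial tail $\eee^{-\theta(\log n)^2}$ for the spectral-gap event the paper relies on a rigidity estimate of Erd\H{o}s--Knowles--Yau--Yin type (Proposition~\ref{prop:EKYYsingular}) rather than a Feige--Ofek or Le--Levina--Vershynin bound, which would deliver only a polynomial tail and hence a weaker version of \eqref{eq:meetingtimeconvergence}.
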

See Section~\ref{sec:discussion} for a discussion including possible extensions.

\subsection{Related literature}
\label{sec:related-literature}
There is a sizeable literature on the meeting times of random walks, see, e.g., \cite{AldousFill1995BookDraft,aldous_interacting_2013,kanade_coalescence_2018}. Meeting times have applications to the long-run behavior of interacting particle systems such as voter model, see, e.g., \cite{aldous_interacting_2013, oliveira_mean_2013, Mossel2017opinion, MasudaPorterLambiotte2017, chen2021precise, HermonEtAl2022, fernleyOrtgiese2023voter}, and some related algorithms on networks, see, e.g., \cite{cooper2010multiple, kanade_coalescence_2018}.

Recently, there has been an increased interest in meeting times on random graphs, see, e.g., \cite{durrett2010some, chen2021precise, fernleyOrtgiese2023voter, avena2022discordant, avena2023meeting}. 
For a sufficiently dense Erdős–Rényi random graph, i.e., $\beta > 0.5$, to the best of our knowledge, the sharpest bound on $t_\mathrm{meet}$ can be derived from \cite[Theorem~B.1]{kanade_coalescence_2018}. Specifically, $t_\mathrm{meet} = \Theta(n)$, which gives no information about the proportionality constant. For $\beta \in (0, 0.5]$, the result gives $t_\mathrm{meet} = O(n t_\text{mix})$ w.h.p.

Spectral characteristics (eigenvalues and -vectors) of the random walk transition matrix $P$ are widely used to analyze the behavior of random walks on graphs (and more generally, Markov processes). For example, mixing times can be bounded using the spectral gap and the Perron-Frobenius eigenvector of $P$~\cite{LevinPeres2017Book}. Hitting times can also be exactly expressed in terms of the eigenvalues and -vectors of $P$~\cite{lovasz1993random}. The latter expression has been applied in \cite{lowe2014hitting,lowe2021central} to analyze hitting times for random walks on Erdős–Rényi random graphs in the connected regime. Metastability can also be characterized spectrally~\cite{bovier2016metastability}.

Singular values of Markov chain generators were (independently of this research) employed to define the notion of a spectral gap for nonreversible Markov chains \cite{chatterjee2023spectral}. 

A formula resembling \eqref{eq:expected-meeting-matrix-formula} appeared in \cite{george2018meeting} (However, no link to singular values \eqref{eq:expected-meeting-spectral} and matrix perturbation theory was made therein.)

Matrix perturbation theory has a relatively long history. Asymptotic expansions were developed in physics literature, e.g., \cite{schrodinger1926quantisierung}. It became a key topic in functional, numerical analyses and statistics, see, e.g., \cite{kato2013perturbation,StewartSun1990matrix}. Non-asymptotic matrix perturbation theory \cite{davisKahan1970rotation, wedin1972perturbation} especially for the modern random matrix models remains a current topic of research; see, e.g., \cite{BaikBenArousPeche2005,benaych2012singular,ORourkeVuWang2018random,vershynin2018high,chenEtAl2021spectralReview,abbe2020entrywise} with numerous contemporary applications, e.g., in statistics and machine learning.

\section{Meeting times: A matrix representation and a spectral formula}
\label{sec:spectral-formula}
In this section, we derive the spectral formula \eqref{eq:expected-meeting-spectral}. Furthermore, we prove the rank-$k$ approximation \eqref{eq:rank-k-error-bound} bound.
\begin{proof}[Proof of Proposition \ref{prop:SVD}]
	First, we follow \cite{george2018meeting} to write the pairwise meeting time in vector form. That is, consider two simple random walks $X_n$ and $Y_n$ both evolving according to $P$, and recall that $\tau_{\mathrm{meet}}^{(n)}(i,j)$ denotes the meeting time of the two walks when started from states $i$ and $j$ respectively. The pairwise meeting time can be found recursively through
	\begin{align}
	\tau_{\mathrm{meet}}^{(n)}(i,j) = \begin{cases}
		1 & \text{with prob. } \sum_{k\in[n]} P_{ik} P_{jk} \\
		\tau_{\mathrm{meet}}^{(n)}(k,\ell) + 1 & \text{with prob. } P_{ik} P_{j\ell}.
	\end{cases}
	\end{align}
	For $i\neq j$, let $M_{ij} = \EB\left(\tau_{\mathrm{meet}}^{(n)}(i,j)\right)$, then it can be seen that,
	\begin{equation}
    \begin{aligned}
		M_{ij} &= 1 + \sum_{k\in[n]} \sum_{\ell \in [n]\setminus\{k\}} P_{ik} P_{j\ell}  M_{k\ell} \\
		&= 1 + \sum_{k\in[n]} P_{ik} \sum_{\ell \in [n]} P_{j\ell}  M_{k\ell} - \sum_{k\in[n]} P_{ik} P_{jk} M_{kk}. 
	\end{aligned}
    \end{equation}
	Written in matrix form, we obtain,
	\begin{equation}\label{eq:meetingtimematrix}
	    M = \mathbf{\underline{1}}_n\mathbf{\underline{1}}_n^t + P\left(M-M_d\right)P^t,
	\end{equation}
	where $M_d$ is a diagonal matrix with the same diagonal elements as $M$. Rewriting in vector form, yields
	\begin{equation}
    \begin{aligned}
		\vek(M) &= \mathbf{\underline{1}}_{n^2} + \left(P\otimes P\right)\left(\vek(M) - \vek(M_d)\right) \\ 
		&= \mathbf{\underline{1}}_{n^2} + \left(P\otimes P\right)\left(I_{n^2} - \vek(I_n)\right)\vek(M) \\
		&= \mathbf{\underline{1}}_{n^2} + \left(P\otimes P\right)E\vek(M).
	\end{aligned}
    \end{equation}
	By rearranging terms it can be seen that, 
	\begin{align}
	\vek\left(M\right) = \left(I_{n^2} - \left(P\otimes P\right)E\right)^{-1} \mathbf{\underline{1}}_{n^2}.
	\end{align}
 Note that $\EB[\tau_{\mathrm{meet}}^{(n)}(i,i)] = 0$ for all $i\in [n]$, so that
 \begin{equation}
     \EB[\tau_{\mathrm{meet}}^{(n)}(i,j)] = M_{ij}\mathbbm{1}\{i\neq j\} = (E\vek(M))_{(i,j)}
 \end{equation}
 hence Equation \eqref{eq:expected-meeting-matrix-formula} follows. We observe that
\begin{align}
t_\mathrm{meet}^\pi = \sum_{i = 1}^n \sum_{\substack{j=1 \\ j \neq i}}^n \pi_i\pi_j M_{ij} = (\pi\otimes\pi)^tE\vek(M),
\end{align}
so that \eqref{eq:expected-meeting-matrix} easily follows. The expected meeting time can also be written as
\begin{align}\label{eq:meetingtimesplit}
 t_\mathrm{meet}^\pi = \sum_{i = 1}^n \sum_{\substack{j=1 \\ j \neq i}}^n \pi_i\pi_j M_{ij} = \sum_{i = 1}^n \sum_{j=1 }^n \pi_i\pi_j M_{ij} - \sum_{i = 1}^n \pi_i^2M_{ii}.
\end{align}
First, we compute the second term. Using \eqref{eq:meetingtimematrix} it can be seen that
\begin{align}
	\pi^tM\pi = \pi^t\mathrm{1}_n\mathrm{1}_n^t\pi + \pi^tP\left(M-M_d\right)P^t\pi = 1 + \pi^t\left(M-M_d\right)\pi.
 \end{align}
So that,
 \begin{align}
 \sum_{i = 1}^n \pi_i^2M_{ii} = \pi^t M_d \pi = 1.
 \end{align}
The first term of \eqref{eq:meetingtimesplit} can be written as
\begin{align}
	\sum_{i = 1}^n \sum_{j=1 }^n \pi_i\pi_j M_{ij} = \left(\pi\otimes\pi\right)^t\left(I_{n^2} - \left(P\otimes P\right)E\right)^{-1}\mathbf{\underline{1}}_{n^2}.
 \end{align}
To find the inverse in the equation above, we use the singular value decomposition of $\LKilled$, i.e., $I - (P\otimes P)E = \UPerturbed \SigmaPerturbed \VPerturbed^t$. Then,
\begin{equation}
    (I - (P\otimes P)E)^{-1} = \VPerturbed\SigmaPerturbed^{-1} \UPerturbed^t = \sum_{i=1}^{n^2} \frac{1}{\sigmaPerturbed_{i}}  \vPerturbed_{i} \uPerturbed_{i}^t.
\end{equation}
Equation \eqref{eq:expected-meeting-spectral} follows by plugging these results into \eqref{eq:meetingtimesplit}.
\end{proof}
\begin{proof}[Proof of Proposition~\ref{lem:rankapprox}]
From Proposition~\ref{prop:SVD}, it follows that
\begin{equation}
    t_{\mathrm{meet}}^{\pi} = -1 + \sum_{i=1}^{n^2} \frac{1}{\sigmaPerturbed_{i}} (\pi\otimes\pi)^t\vPerturbed_{i} \uPerturbed_{i}^t\mathbf{\underline{1}}_{n^2}.
\end{equation}
    The result then follows from the observation that
    \begin{equation}
        \begin{aligned}
             \left|\hat{\text{t}}_{\text{meet}}^\pi - t_{\mathrm{meet}}^{\pi}\right| &= \bigg| (\pi\otimes\pi)^t\left( \sum_{i = n^2 -k + 1}^{n^2}\frac{ \vPerturbed_{i} \uPerturbed_{i}^t}{\sigmaPerturbed_{i}} - \sum_{i=1}^{n^2} \frac{\vPerturbed_{i} \uPerturbed_{i}^t}{\sigmaPerturbed_{i}}  \right) \mathbf{\underline{1}}_{n^2}\bigg| \\
        &\leq \Vert\pi\otimes\pi\Vert\bigg\Vert \sum_{i = 1}^{n^2 - k} \frac{\vPerturbed_{i} \uPerturbed_{i}^t}{\sigmaPerturbed_{i}}  \bigg\Vert\Vert\mathbf{\underline{1}}_{n^2}\Vert \\
        &= \frac{1}{\sigmaPerturbed_{n^2-k}} n\Vert\pi\Vert^2
        ,
        \end{aligned}
    \end{equation}
    where the last equality follows from the Eckhart-Young(-Mirsky) Theorem~\cite{eckart1936approximation} and the observation that
    \begin{equation}
        \Vert\pi\otimes\pi\Vert^2 = \sum_{i = 1}^{n^2} (\pi\otimes\pi)_i^2 = \sum_{(i,j) = (1,1)}^{(n,n)} (\pi\otimes\pi)_{(i,j)}^2 = \sum_{i = 1}^{n}\sum_{j = 1}^{n} \pi_i^2\pi_j^2 = \Vert\pi\Vert^4.
    \end{equation}
\end{proof}

\section{Matrix Perturbation Theory}\label{sec:matrixperturbation}
To effectively utilize Proposition~\ref{lem:rankapprox}, we need to obtain sufficiently sharp estimates for the $k+1$ least singular values of the diagonally killed generator $\LKilled$. We use a result from non-asymptotic matrix perturbation theory \cite[Theorems 4.11 and 4.12]{stewart1973error} to obtain useful bounds on the least singular value.  For reader's convenience, we provide the statement of this result in the Appendix, cf.~Theorem~
\ref{thm:Stewart-perturbation} Recall the definitions of $v_{n^2}$ and $u_{n^2}$ in \eqref{eq:unperturbedvectors} and $\sigma_{n^2}$ in \eqref{eq:unperturbed-singular-values}, and denote the singular value decomposition of the unperturbed matrix $L$ in block form by
\begin{equation}
   L =  I_{n^2} - (P\otimes P)  =: \left[u_{n^2} \quad U_2\right]^t\left[\begin{matrix}
        \sigma_{n^2} & 0 \\
        0 & \Sigma_2
    \end{matrix}    
    \right] \left[v_{n^2} \quad V_2\right],
\end{equation}
where $U_2$ has the columns $\left(u_i\right)_{i<n^2}$ and $V_2$ has the columns $\left(v_i\right)_{i<n^2}$, and $\Sigma_2 := \diag (\sigma_i)_{i<n^2}$ is a diagonal matrix with the remaining singular values.
We apply Theorem~\ref{thm:Stewart-perturbation} with
\begin{equation}
    \begin{aligned}
        B &= L^tL \\
        \Delta &= \LKilled^t \LKilled - L^tL \\
        X &= \left[v_{n^2} \quad V_2\right].
    \end{aligned}
\end{equation}
Note that in this application the block matrices defined in $\eqref{eq:blocksA}$ are given by $B_{11} = 0$, $B_{22} = \Sigma_2^2$, $B_{12} = 0$. We define the parameters $\gamma_{11}$, $g_{12}$, $g_{21}$, and $G_{22}$ through
\begin{equation}
    \left[u_{n^2}\quad U_2\right]^t(P\otimes P)(E-I)\left[v_{n^2}\quad V_2\right] = \left[\begin{matrix}
        \gamma_{11} & g_{12}^t \\
        g_{21} & G_{22}
    \end{matrix}    
    \right].
\end{equation}
By observing that
\begin{equation}
    \left[u_{n^2}\quad U_2\right]^t(I - (P\otimes P)E)\left[v_{n^2}\quad V_2\right] = \left[\begin{matrix}
        \gamma_{11} & g_{12}^t \\
        g_{21} & \Sigma_2 + G_{22}
    \end{matrix}    
    \right],
\end{equation}
and that $B_{11} = \sigma_{n^2} = 0$, it can be seen that
\begin{equation}
    X^t(B + \Delta)X = \left[\begin{matrix}
        \gamma_{11}^2 + g_{21}^tg_{21} & g_{21}^t\Sigma_2 + \gamma_{11}g_{12}^t + g_{21}^tG_{22} \\
        \Sigma_2g_{21} + \gamma_{11}g_{12} + G_{22}^tg_{21} & (\Sigma_2 + G_{22})^t(\Sigma_2 + G_{22}) + g_{12}g_{12}^t
    \end{matrix}    
    \right].
\end{equation}
Therefore, the blocks in \eqref{eq:blocksDelta} are given by,
\begin{equation}
    \begin{aligned}
        \Delta_{11} &= \gamma_{11}^2 + g_{21}^tg_{21}, \\
        \Delta_{12} &= g_{21}^t\Sigma_2 + \gamma_{11}g_{12}^t + g_{21}^tG_{22}, \\
        \Delta_{21} &= \Sigma_2g_{21} + \gamma_{11}g_{12} + G_{22}^tg_{21}, \\
        \Delta_{22} &= G_{22}^t\Sigma_2 + \Sigma_2G_{22} + G_{22}^tG_{22} + g_{12}g_{12}^t.
    \end{aligned}
\end{equation}
Let $B\in \R^{\ell\times\ell}$ and $C\in\R^{m\times m}$. Define the operator $T_{B,C}\colon \R^{m\times\ell} \to \R^{m\times\ell}$ through,
\begin{equation}
    T_{B,C}P = PB - CP.
\end{equation}
The separation of $B$ and $C$ is the number $\text{sep}(B,C)$ defined by
\begin{equation}
    \text{sep}(B,C) := \begin{cases}
        \Vert T_{B,C}^{-1}\Vert^{-1}, & \text{if } 0\notin\lambda(T),\\
        0, & \text{otherwise}.
    \end{cases}
\end{equation}
Let 
\begin{equation}
    \delta = \text{sep}(B_{11}, B_{22}) - \Vert\Delta_{11}\Vert - \Vert\Delta_{22}\Vert.
\end{equation}
Note that since $B_{11} = 0$, and $B_{22} = \Sigma_2^2$, we get
\begin{equation}
    \text{sep}(B_{11}, B_{22}) = \Vert\Sigma_2^{-2}\Vert^{-1}.
\end{equation}
If
\begin{equation}
    \frac{4}{\delta^2}\Vert\Delta_{21}\Vert\left(\Vert B_{12}\Vert + \Vert\Delta_{12}\Vert\right) \leq 1,
\end{equation}
then there exists a matrix $Q$ satisfying $\Vert Q\Vert\leq\frac{2}{\delta} \Vert\Delta_{12}\Vert$ such that
\begin{equation}\label{eq:sigmaperturbedexact}
    \sigmaPerturbed_{n^2} = \lambda_{\text{min}}\left(B + \Delta\right)  = \gamma_{11}^2 + g_{21}^tg_{21} + (g_{21}^t\Sigma_2 + \gamma_{11}g_{12}^t + g_{21}^tG_{22})Q.
\end{equation}
Furthermore, the columns of $X_1^{'} = (1 + \Vert Q\Vert^2)^{-\frac{1}{2}}(v_{n^2} + V_2Q)$ span an invariant subspace of $B + \Delta$. Note that \eqref{eq:sigmaperturbedexact}, gives us an exact expression for the least singular value of the diagonally killed generator $\LKilled$. As we have seen in \eqref{eq:gamma11computation}, $\gamma_{11} = -\frac{1}{n}$ for any transition matrix $P$. The remaining terms depend on the structure of the graph and therefore need to be bounded on a case by case basis. In Section~\ref{sec:dense-random}, we will bound the terms $g_{12}, g_{21}, G_{22}, \Sigma_2$, and $Q$ by their operator norms, for the specific case where the graph is an Erd\H{o}s-R\'enyi random graph. In order to obtain useful upper- and lower-bounds on the least singular value, we define 
\begin{equation}
\begin{aligned}
    \Tilde{\gamma}_{11}^2 &\defeq \Vert g_{21} \Vert^2 + \gamma_{11}^2 = v_{n^2}^t(E-I)^t(P\otimes P)^t U_2 U_2^t (P\otimes P)(E-I) v_{n^2} + \gamma_{11}^2 \\&= v_{n^2}^t (E-I)^t(P\otimes P)^t\left(I - u_{n^2}u_{n^2}^t\right) (P\otimes P)(E-I) v_{n^2} + \gamma_{11}^2 \\
    &= \frac{1}{n^2} \mathbf{\underline{1}}_{n^2} (E-I)^t (P\otimes P)^t(P\otimes P)(E-I) \mathbf{\underline{1}}_{n^2},
\end{aligned} 
\end{equation}
to see that
\begin{equation}\label{eq:uplowsigma}
\begin{aligned}
    \sigmaPerturbed_{(n,n)}^2 &\geq \Tilde{\gamma}_{11}^2 - 2\cdot\frac{\Vert\Delta_{12}\Vert^2}{\delta}, \\
    \sigmaPerturbed_{(n,n)}^2 &\leq \Tilde{\gamma}_{11}^2 + 2\cdot\frac{\Vert\Delta_{12}\Vert^2}{\delta}.
\end{aligned}
\end{equation}
In Section~\ref{sec:dense-random}, we provide an example of graphs in which the term $2\cdot\frac{\Vert\Delta_{12}\Vert^2}{\delta}$ is of lower order than $\Tilde{\gamma}_{11}^2$. If this latter condition is satisfied, we can estimate the least singular value and use Propositions~\ref{prop:SVD} and \ref{lem:rankapprox} to conclude that w.h.p.~the approximation in \eqref{eq:meeting-via-gamma-approximation} is valid.

\section{Application to dense random graphs}
\label{sec:dense-random}
In this section, we will provide an application of Proposition \ref{prop:SVD} to dense Erdős–Rényi graphs. This will result in a proof of Theorem \ref{thm:meetingtime}. 

We start by proving some concentration results. It will be useful to define the following quantities,
\begin{equation}\label{eq:R_1def}
    \begin{aligned}
        \epsilon_i \defeq \frac{\deg i}{d} - 1 \qquad  R_1 \defeq \max_{i\in V} |\epsilon_i|,
    \end{aligned}
\end{equation}
where $d := cn^\beta$ is the average degree for each vertex.
Note these definitions imply
\begin{equation}\label{eq:maxmindegree}
    \min_{i\in V} \deg i \geq d(1 - R_1) \qquad \max_{i\in V} \deg i \leq d(1 + R_1).
\end{equation}
Furthermore, for $k\neq\ell$ define
\begin{equation}\label{eq:R_2def}
    \epsilon_{(k,\ell)} \defeq \frac{n}{d^2}\left(\sum_i \I\{k\sim i\}\I\{\ell \sim i\}\right) - 1 \qquad  R_2 \defeq \max_{(k,\ell)\in V\times V, k\neq\ell} |\epsilon_{(k,\ell)}|.
\end{equation}
Finally, for $\nu_1, \nu_2 > 0$, we define the events
\begin{equation}
        F_{\nu_1,\nu_2} \defeq \left\{R_1 \leq \nu_1 \right\} \cap \left\{R_2 \leq \nu_2\right\} \quad \text{and} \quad F_{\nu_1}\defeq \left\{R_1 \leq \nu_1 \right\}.
\end{equation}
\begin{lemma}\label{lem:concentrationR}
    For any $\nu_1,\nu_2 > 0$,
    \begin{equation}\label{eq:F1compup}
        \P\left(F_{\nu_1}^c\right) \leq 2n\exp\left(- \frac{\nu_1^2 np}{3}\right),
    \end{equation}
    and
    \begin{equation}\label{eq:F12compup}
        \P\left(F_{\nu_1,\nu_2}^c\right) \leq 2n\exp\left(- \frac{\nu_1^2 np}{3}\right) + 2\binom{n}{2}\exp\left(- \frac{\nu_2^2 np^2}{3}\right).
    \end{equation}
\end{lemma}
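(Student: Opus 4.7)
The plan is to view each degree $\deg i$ and each co-degree $\sum_j \I\{k\sim i\}\I\{\ell \sim i\}$ as sums of independent Bernoulli random variables, apply a multiplicative Chernoff bound to each, and conclude via a union bound over the (at most) $n$ vertices, respectively $\binom{n}{2}$ unordered pairs $\{k,\ell\}$.

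\emph{Concentration of $R_1$.} For each $i \in V$, $\deg i = \sum_{j \neq i} a_{ij}$ is a sum of $n-1$ i.i.d.\ Bernoulli$(p)$ variables, hence $\deg i \sim \mathrm{Binomial}(n-1,p)$ with mean $\mu_1 := (n-1)p$. The multiplicative Chernoff bound gives, for $\delta \in (0,1]$,
\begin{equation*}
    \P\bigl(|\deg i - \mu_1| \geq \delta \mu_1\bigr) \leq 2\exp\bigl(-\delta^2 \mu_1/3\bigr).
\end{equation*}
Since $|\mu_1 - d| = p$ is of lower order than $d = np$, one chooses $\delta = \delta(\nu_1)$ so that the event $\{|\deg i - d| \geq \nu_1 d\}$ is contained in $\{|\deg i - \mu_1| \geq \delta \mu_1\}$; a short manipulation gives $\delta \mu_1 \geq \nu_1 d - p$, which for $n$ large enough is at least $\nu_1 (n-1)p$ up to an error absorbed into the exponent to produce the stated bound $2\exp(-\nu_1^2 np/3)$. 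A union bound over $i \in V$ then yields \eqref{eq:F1compup}.

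\emph{Concentration of $R_2$.} For $k \neq \ell$, the key observation is that
\begin{equation*}
    \sum_{i \in V} \I\{k\sim i\}\I\{\ell \sim i\} = \sum_{i \in V \setminus \{k,\ell\}} a_{ki}\, a_{\ell i},
\end{equation*}
since $a_{kk} = a_{\ell\ell} = 0$. For each $i \notin \{k,\ell\}$, the edges $\{k,i\}$ and $\{\ell,i\}$ are distinct edges of $G_n$, so $a_{ki}$ and $a_{\ell i}$ are independent Bernoulli$(p)$, making $a_{ki} a_{\ell i}$ a Bernoulli$(p^2)$ variable; moreover, these products are mutually independent across $i$ (they involve disjoint edge variables). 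Therefore the sum has distribution $\mathrm{Binomial}(n-2, p^2)$ with mean $\mu_2 := (n-2)p^2$. Applying the multiplicative Chernoff bound, relating the deviation from $\mu_2$ to the deviation from $d^2/n = np^2$ exactly as above (the gap $|\mu_2 - np^2| = 2p^2$ is of lower order), and union-bounding over the $\binom{n}{2}$ unordered pairs $\{k,\ell\}$, gives the second summand in \eqref{eq:F12compup}. Combining the two bounds via $\P(F_{\nu_1,\nu_2}^c) \leq \P(R_1 > \nu_1) + \P(R_2 > \nu_2)$ yields \eqref{eq:F12compup}.

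\emph{Main obstacle.} The only subtlety is the discrepancy between the binomial mean ($(n-1)p$ or $(n-2)p^2$) and the normalization one wants to use ($d = np$ or $d^2/n = np^2$): it forces a slight rescaling of the Chernoff parameter and requires $n$ sufficiently large so that the lost factor $(1 - 1/n)$ (or $(1-2/n)$) in the exponent can be absorbed. All other steps are routine independence and union-bound arguments.
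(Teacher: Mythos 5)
Your proposal follows essentially the same route as the paper: identify each degree and each co-degree as a binomial (Bernoulli sum over disjoint edge variables), apply the multiplicative Chernoff bound, and union-bound over $n$ vertices resp.\ $\binom{n}{2}$ pairs. The paper's proof simply writes $\P(|\deg_1 - np| > np\nu_1) \leq 2\exp(-\nu_1^2 np/3)$ without dwelling on the $(n-1)p$ vs.\ $np$ (or $(n-2)p^2$ vs.\ $np^2$) mean discrepancy that you flag; that discrepancy is harmless here, and so your added ``only for $n$ large'' caveat is not actually needed.
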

\begin{remark}
    Note that the upper-bound in \eqref{eq:F1compup} goes to zero whenever $np$ grows faster than $\log n$, which holds for any $\beta > 0$. The upper-bound in \eqref{eq:F12compup} however, goes to zero whenever $np^2$ grows faster than $\log\binom{n}{2}$. Since $np^2 = \frac{1}{c^2} n^{1-2\beta}$, this is the case for $\beta > \frac{1}{2}$.
\end{remark}
\begin{proof}
We use the union bound to get that for any $\nu_1>0$,
\begin{equation}
    \begin{aligned}
\P\left(R_1 > \nu_1\right) = \P\left(\exists i\in V : \bigg|\frac{\deg_i}{np} - 1\bigg| > \nu_1 \right) \leq n\P\left(\bigg|\frac{\deg_1}{np} - 1\bigg| > \nu_1\right).
\end{aligned}
\end{equation}
The probability on the right-hand side can be bounded using a Chernoff bound, i.e.,
\begin{align}
    \P\left(\bigg|\frac{\deg_1}{np} - 1\bigg| > \nu_1\right) = \P\left(\big|\deg_1 - np\big| > np\nu_1\right) \leq 2\exp\left(-\frac{\nu_1^2 np}{3}\right).
\end{align}
    Equation \eqref{eq:F1compup} follows. Similarly, note that
    \begin{equation}
    \begin{aligned}
        \P\left(R_2 > \nu_2\right) &= \P\left(\exists k\neq\ell : \bigg|\frac{1}{np^2}\sum_i \I\{k\sim i\}\I\{\ell \sim i\} - 1\bigg| > \nu_2\right) \\ &\leq \binom{n}{2} \P\left(\bigg|\frac{1}{np^2}\sum_i \I\{1\sim i\}\I\{2 \sim i\} - 1\bigg| > \nu_2\right) \\
        &\leq 2\binom{n}{2} \exp\left(-\frac{\nu_2^2np^2}{3}\right).
    \end{aligned}
    \end{equation}
    The standard bound
    \begin{equation}
        \P\left(F_{\nu_1,\nu_2}^c\right) \leq \P\left(R_1 > \nu_1\right) + \P\left(R_2 > \nu_2\right)
    \end{equation}
    completes the proof of \eqref{eq:F12compup}.
\end{proof}
Another graph-dependent event we frequently use is the convergence of the second least singular value. Denote the adjacency matrix of $G = G_n$ by $A = A_n$. We denote by $F$ the event that the second least singular value of $\frac{A}{\sqrt{d}}$ is at most $8$:
\begin{equation}
    F := \left\{\sigma_2\left(\frac{A}{\sqrt{d}}\right) \leq 8\right\}.
\end{equation}
We need the following adaptation from \cite{erdHos2012spectral, erdos2013} as follows,
\begin{proposition}\label{prop:EKYYsingular}
    There exist a constant $\theta > 0$ and $N\in \mathbb{R}_+$, such that for all $n\geq N$,
    \begin{equation}
        \P\left(F^c\right) \leq \exp\left(-\theta \left(\log n\right)^{2}\right)
    \end{equation}
\end{proposition}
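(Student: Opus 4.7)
The plan is to bound $\sigma_2(A/\sqrt{d})$ by the operator norm of the centered adjacency matrix $A - p_n J$, for which sharp deviation bounds follow from the local semicircle law / edge rigidity results for sparse Erdős–Rényi graphs proved in the cited references \cite{erdHos2012spectral, erdos2013}.

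First I would write the rank-one decomposition $A = p_n J + (A - p_n J)$, where $J = \mathbf{\underline{1}}_n \mathbf{\underline{1}}_n^t$ is the all-ones matrix. The matrix $p_n J$ has rank one, with unique nonzero singular value $p_n n = d$ and top singular vector $n^{-1/2}\mathbf{\underline{1}}_n$. Applying Weyl's inequality for singular values, $\sigma_{i+j-1}(B+C) \leq \sigma_i(B) + \sigma_j(C)$, with $i=2$, $j=1$, $B = p_n J$, $C = A - p_n J$, gives
\begin{equation}
\sigma_2(A) \leq \sigma_2(p_n J) + \sigma_1(A - p_n J) = \Vert A - p_n J\Vert.
\end{equation}
Dividing by $\sqrt{d}$ reduces the problem to proving an upper bound of (slightly less than) $8$ on $\Vert A - p_n J \Vert / \sqrt{d}$ with the required probability.

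Second, I would invoke the extreme eigenvalue estimates from \cite{erdHos2012spectral, erdos2013}. In the regime where $d = np_n$ grows at least poly-logarithmically in $n$, those works establish an edge rigidity statement which implies, for any fixed $\eta > 0$, a stretched-exponential deviation bound of the form
\begin{equation}
\P\!\left(\Vert A - p_n J\Vert > (2+\eta)\sqrt{d(1-p_n)}\right) \leq \exp\!\left(-\theta(\log n)^2\right),
\end{equation}
valid for all sufficiently large $n$ and some $\theta = \theta(\eta) > 0$. Since $\beta > 1/2$ gives $d = cn^{\beta}$, the sparsity hypothesis of these results is satisfied with room to spare, and $\sqrt{1 - p_n} \leq 1$.

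Third, I would combine the two steps: choosing $\eta$ small enough that $2 + \eta < 8$ (e.g., $\eta = 1/2$), on the complement of the rare event above we have
\begin{equation}
\sigma_2\!\left(\frac{A}{\sqrt{d}}\right) = \frac{\sigma_2(A)}{\sqrt d} \leq \frac{\Vert A - p_n J\Vert}{\sqrt d} \leq (2+\eta)\sqrt{1-p_n} < 8,
\end{equation}
which is precisely the event $F$. Hence $\P(F^c) \leq \exp(-\theta(\log n)^2)$ for $n \geq N$, as claimed. The main obstacle is locating the precise quantitative form of the edge rigidity deviation inequality in \cite{erdHos2012spectral, erdos2013} with the exponent $(\log n)^2$ and verifying its applicability in the sparsity regime $p_n = cn^{\beta-1}$ with $\beta > 1/2$; once that citation is pinned down, the remainder of the argument is a single application of Weyl's inequality to a rank-one perturbation.
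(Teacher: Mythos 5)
Your proposal is correct and takes essentially the same approach as the paper: both reduce the statement to the Erd\H{o}s--Knowles--Yau--Yin extreme-eigenvalue rigidity estimate for sparse Erd\H{o}s--R\'enyi graphs, with the parameter $\xi = 2$ chosen so that the failure probability is $\exp(-\theta(\log n)^2)$. The only cosmetic difference is that you first pass through $\sigma_2(A) \le \Vert A - p_n J\Vert$ via Weyl's inequality and then invoke the deviation bound for the centered adjacency matrix, whereas the paper cites Equation~(3.19) of \cite{erdHos2012spectral}, which already states the rigidity bound directly for $\sigma_2\bigl(A/\sqrt{np(1-p)}\bigr)$; both routes are valid and rest on the same black box.
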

\begin{remark}
    In fact, \cite{erdHos2012spectral} shows that for $d \gg (\log n)^{2C\xi}$,
    \begin{equation}
        \sigma_2\left(\frac{A}{\sqrt{d}}\right) \rightarrow 2
    \end{equation}
    with high probability. For our purposes, it suffices that $\sigma_2\left(\frac{A}{\sqrt{d}}\right)$ is bounded. The constant $8$ is arbitrary and chosen for (subjective) aesthetic reasons. It could be replaced by any constant bigger than $2$.
\end{remark}
\begin{proof}
    By \cite[Equation (3.19)]{erdHos2012spectral}, there exist constants $\theta, C > 0$ such that
    \begin{equation}
        \sigma_2\left(\frac{A}{\sqrt{d}}\right) \leq \sigma_2\left(\frac{A}{\sqrt{np(1-p)}}\right) \leq 2 + \left(\log n\right)^{C\xi}\left(\frac{1}{d} + \frac{1}{n^{2/3}}\right)
    \end{equation}
    holds with probability at least $1 - \exp\left(-\theta \left(\log n\right)^\xi\right)$, where
    \begin{equation}
        1 + a_0 \leq \xi \leq A_0 \log\log n
    \end{equation}
    for constants $a_0 > 0$ and $A_0 \geq 10$. We pick $\xi = 2$ to see that
    \begin{equation}
        \sigma_2\left(\frac{A}{\sqrt{d}}\right) \leq 2 + \left(\log n\right)^{2C}\left(\frac{1}{d} + \frac{1}{n^{2/3}}\right)
    \end{equation}
    holds with probability at least $1 - \exp\left(-\theta \left(\log n\right)^2\right)$.
    Let $n$ be such that,
    \begin{equation}\label{eq:log-vs-sqrtn}
         \frac{\left(\log n\right)^{2C}}{\sqrt{n}} \leq 3c \quad \text{ and } \quad  \frac{\left(\log n\right)^{2C}}{n^{2/3}} \leq 3.
    \end{equation}
    Note that there exists $N\in\R_+$ such that \eqref{eq:log-vs-sqrtn} is satisfied for all $n\geq N$. Therefore, for all $n \geq N$,
    \begin{equation}
        \sigma_2\left(\frac{A}{\sqrt{d}}\right) \leq 8
    \end{equation}
    holds with probability at least $1 - \exp\left(-\theta \left(\log n\right)^2\right)$. 
\end{proof}
Another key quantity is the norm of the perturbation, i.e., $\Vert \left(P\otimes P\right)(E - I)\Vert$. It can be usefully bounded in terms of the previously defined quantities $R_1$ and $R_2$ as follows.
\begin{lemma}\label{lem:NormPerturbation}
    Let $\beta \in (0,1)$ then,
    \begin{equation}\label{eq:normperturbationupper}
        \Vert (P\otimes P)(E-I)\Vert^2 \leq \frac{(1+R_1)^2}{d^2(1-R_1)^4} + \frac{(1+R_2)^2}{n(1-R_1)^4} - \frac{(1+R_2)^2}{n^2(1-R_1)^4}.
    \end{equation}
    For $\beta > \frac{1}{2}$,
    \begin{equation}\label{eq:normperturbationlower}
        \Vert (P\otimes P)(E-I)\Vert^2 \geq \frac{(1-R_1)^2}{d^2(1+R_1)^4} + \frac{(1-R_2)^2}{n(1+R_1)^4} - \frac{(1-R_2)^2}{n^2(1+R_1)^4}
        .
    \end{equation}
\end{lemma}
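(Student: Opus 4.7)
The plan is to reduce $\Vert (P\otimes P)(E - I)\Vert^{2}$ to the largest eigenvalue of an explicit nonnegative $n \times n$ Gram-type matrix, and then sandwich that matrix componentwise using $R_{1}$ and $R_{2}$.

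First I would exploit the very sparse structure of $E - I$: from the explicit formula for $E$ in Section~\ref{sec:meeting-time-via-svd}, $E - I$ is the diagonal $n^{2} \times n^{2}$ matrix with $(E - I)_{(k, \ell), (k, \ell)} = -\I\{k = \ell\}$, so $(P \otimes P)(E - I)$ vanishes outside the $n$ columns indexed by the diagonal pairs $(k, k)$, $k \in [n]$, and these surviving columns are precisely $-(Pe_{k}) \otimes (Pe_{k})$. Collecting them into a matrix $\tilde M \in \R^{n^{2} \times n}$, I have
\begin{equation*}
    \Vert (P \otimes P)(E - I)\Vert^{2} = \Vert \tilde M\Vert^{2} = \lambda_{\max}(\tilde M^{t} \tilde M),
\end{equation*}
and a direct Kronecker-inner-product computation gives
\begin{equation*}
    (\tilde M^{t} \tilde M)_{k, k'} = \langle Pe_{k},\, Pe_{k'}\rangle^{2} = Q_{k, k'}^{2}, \qquad Q_{k, k'} \defeq \sum_{i \in [n]} \frac{a_{ik}\, a_{i k'}}{(\deg i)^{2}},
\end{equation*}
using $P_{ij} = a_{ij}/\deg i$ for simple random walks.

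Next, I would sandwich each entry $Q_{k, k'}$ using \eqref{eq:R_1def}--\eqref{eq:R_2def} and \eqref{eq:maxmindegree}. For $k = k'$, $Q_{k, k} = \sum_{i \sim k}(\deg i)^{-2}$ has exactly $\deg k$ summands, each in $[d^{-2}(1 + R_{1})^{-2},\, d^{-2}(1 - R_{1})^{-2}]$, yielding $(1 - R_{1})/[d(1 + R_{1})^{2}] \leq Q_{k, k} \leq (1 + R_{1})/[d(1 - R_{1})^{2}]$. For $k \neq k'$, the numerator $\sum_{i} a_{ik} a_{i k'}$ counts common neighbors of $k$ and $k'$, hence lies in $[d^{2}(1 - R_{2})/n,\, d^{2}(1 + R_{2})/n]$; combined with the degree bound on $(\deg i)^{2}$ this gives $(1 - R_{2})/[n(1 + R_{1})^{2}] \leq Q_{k, k'} \leq (1 + R_{2})/[n(1 - R_{1})^{2}]$. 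Squaring componentwise---and, for the lower bound only, assuming $R_{2} \leq 1$ so that the squared lower bound remains nonnegative (this is where $\beta > 1/2$ enters, since it forces $R_{2} \to 0$ with high probability via Lemma~\ref{lem:concentrationR})---I obtain componentwise nonnegative matrix bounds
\begin{equation*}
    \alpha_{-}(J - I_{n}) + \beta_{-} I_{n} \;\leq\; \tilde M^{t} \tilde M \;\leq\; \alpha_{+}(J - I_{n}) + \beta_{+} I_{n},
\end{equation*}
with $\alpha_{\pm} \defeq (1 \pm R_{2})^{2} / [n^{2}(1 \mp R_{1})^{4}]$, $\beta_{\pm} \defeq (1 \pm R_{1})^{2} / [d^{2}(1 \mp R_{1})^{4}]$, and $J$ the all-ones $n \times n$ matrix.

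Finally, since every matrix in the display is symmetric and entrywise nonnegative, Perron--Frobenius makes $\lambda_{\max}$ monotone under componentwise order: if $0 \leq A \leq B$ entrywise, picking a nonnegative Perron eigenvector $y$ of $A$ with $\Vert y\Vert = 1$ and comparing Rayleigh quotients gives $\lambda_{\max}(A) = y^{t} A y \leq y^{t} B y \leq \lambda_{\max}(B)$. The spectrum of $\alpha(J - I_{n}) + \beta I_{n}$ is $\alpha(n-1) + \beta$ (simple, eigenvector $\mathbf{\underline{1}}_{n}$) together with $\beta - \alpha$ (multiplicity $n - 1$), so its top eigenvalue is $\beta + \alpha(n - 1)$; substituting $(\alpha_{\pm}, \beta_{\pm})$ reproduces exactly the right-hand sides of \eqref{eq:normperturbationupper} and \eqref{eq:normperturbationlower}. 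The principal subtleties are (i) recognizing that $(P \otimes P)(E - I)$ has rank at most $n$, which reduces the spectral problem to a modest Gram matrix, and (ii) converting entrywise concentration of $Q_{k, k'}$ into a spectral-norm bound via Perron--Frobenius monotonicity; once these are in hand, the remainder is careful bookkeeping with $R_{1}$ and $R_{2}$.
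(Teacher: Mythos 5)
Your proof is correct and essentially the same as the paper's: both reduce $\Vert(P\otimes P)(E-I)\Vert^{2}$ to the top eigenvalue of an explicit $n\times n$ matrix of squared common-neighbor functionals (the paper's $S$, your $\tilde M^{t}\tilde M$), sandwich its entries via $R_{1}$ and $R_{2}$, and read off the extremal eigenvalue of a rank-one-shift matrix $\alpha(J-I_{n})+\beta I_{n}$. The only cosmetic differences are that the paper first factors $P=D^{-1}A$ so that the Gram matrix is built from $A\otimes A$ with the degree factors pulled out via $\sigma_{\min/\max}(D^{-1}\otimes D^{-1})$, and it reaches the endpoint via Gershgorin (upper) and a Rayleigh quotient at the uniform vector (lower) rather than Perron--Frobenius monotonicity --- all of which produce identical bounds.
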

\begin{remark}
    This lemma shows that for $\beta > \frac{1}{2}$ the norm of the perturbation behaves like
    \begin{equation}
        \Vert (P\otimes P)(E-I)\Vert^2 \sim \frac{1}{n}.
    \end{equation}
    For $\beta < \frac{1}{2}$, $R_2$ grows. Therefore, we expect that the upper-bound in \eqref{eq:normperturbationupper} is not sharp.
\end{remark}
\begin{proof}
        Define the matrix $S\in\R^{n\times n}$     \begin{equation}
        S_{i,k} = \left(\sum_{q} A_{qi}A_{qk}\right)^2 = \begin{cases}
            d^2(1 + \epsilon_i)^2 & \text{if } i=k \\
            \frac{d^4}{n^2}(1 + \epsilon_{(i,k)})^2 & \text{if } i\neq k,
        \end{cases}
    \end{equation}
    so that it contains the non-zero elements of $(E-I)^t(A\otimes A)^t(A\otimes A)(E-I)$. Then
    \begin{equation}\label{eq:uppernormS}
        \Vert (P\otimes P)(E-I)\Vert^2 \leq \frac{\Vert S\Vert}{d^4(1-R_1)^4} \text{ and } \Vert (P\otimes P)(E-I)\Vert^2 \geq \frac{\Vert S\Vert}{d^4(1+R_1)^4}.
    \end{equation}
    Note that by the Gershgorin Circle Theorem,
    \begin{equation}\label{eq:normSup}
        \Vert S\Vert \leq d^2\left(1 + R_1\right)^2 + \frac{d^4}{n}\left(1 + R_2\right)^2 - \frac{d^4}{n^2}\left(1 + R_2\right)^2.
    \end{equation}
    Equation \eqref{eq:normperturbationupper} follows from \eqref{eq:uppernormS} and \eqref{eq:normSup}. To prove the second statement \eqref{eq:normperturbationlower}, we note that,
        \begin{equation}\label{eq:lowernormS}
        \Vert (P\otimes P)(E-I)\Vert^2 \geq \frac{\Vert S\Vert}{d^4(1+R_1)^4}.
    \end{equation}
    Furthermore, 
    \begin{equation}\label{eq:normSlow}
        \begin{aligned}
            \Vert S\Vert = \max_{\Vert x\Vert = 1} x^tSx \geq \frac{1}{n} \sum_{(i,k)}  S_{i,k} \geq d^2\left(1-R_1\right)^2 + \left(1 - \frac{1}{n}\right)\frac{d^4}{n}\left(1-R_2\right)^2
            ,
        \end{aligned}
    \end{equation}
    where the first inequality follows from taking $x$ to be the uniform vector. By combining \eqref{eq:lowernormS} and \eqref{eq:normSlow}, we obtain \eqref{eq:normperturbationlower}. 
\end{proof}
Next, we show that some key quantities in the application of Theorem \ref{thm:Stewart-perturbation} are bounded. These bounds will provide the backbone of the proof of Theorem \ref{thm:meetingtime}.
\begin{lemma}\label{lem:normestimates}
    Let $\beta > \frac{1}{2}$. For every $\varepsilon_1 > 0$, there exists $N_1\in\R_+$ and constants $\nu_1, \nu_2 > 0$ such that, for all $n \geq N_1$,
    \begin{align}\label{eq:Sigmaup}
        &\P\left(\Vert\Sigma_2\Vert\leq 2 + \varepsilon_1\mid F_{\nu_1}\right) = 1,
\\
\label{eq:g12up}
        &\P\left(n\Vert g_{12}\Vert^2 \leq 1 +\varepsilon_1\mid F_{\nu_1}\right) = 1,
\\
\label{eq:invariantup}
        &\P\left(n\Vert \pi\Vert^2 \leq 1 +\varepsilon_1\mid F_{\nu_1}\right) = 1,
\\
\label{eq:G22up}
        &\P\left(\sqrt{n}\Vert G_{22}\Vert \leq 1 +\varepsilon_1\mid F_{\nu_1, \nu_2}\right) = 1,
\\
\label{eq:g21up}
        &\P\left(n^2\Vert g_{21}\Vert^2 \leq (1 +\varepsilon_1)^2 - 1\mid F_{\nu_1, \nu_2}\right) = 1,
\\
\label{eq:Sigmainv}
        &\P\left((1 - \varepsilon_1)^2 \leq\Vert \Sigma_2^{-2}\Vert^{-1} \leq (1 + \varepsilon_1)^2\mid F_{\nu_1} \cap F\right) = 1.
    \end{align}
\end{lemma}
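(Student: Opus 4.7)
My plan is to treat all six bounds as deterministic inequalities that hold \emph{on} the conditioning event (so the conditional probabilities trivially equal $1$), then to choose $\nu_1, \nu_2$ and $N_1$ at the end so that each resulting factor of the form $(1\pm\nu_1)^a(1\pm\nu_2)^b$ lands sufficiently close to $1$ to absorb the $\varepsilon_1$ slack.

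The three combinatorial bounds \eqref{eq:invariantup}, \eqref{eq:g12up}, \eqref{eq:g21up} should follow from short explicit calculations. Since $\pi_i = \deg i/(2|E|)$ and $2|E|\in[nd(1-\nu_1), nd(1+\nu_1)]$ on $F_{\nu_1}$ by \eqref{eq:maxmindegree}, claim \eqref{eq:invariantup} is immediate. For \eqref{eq:g12up}, I plan to use the identities $(P\otimes P)^t u_{n^2} = u_{n^2}$ (from $P^t\pi=\pi$) and $(E-I)^2 = I-E$ to collapse $\Vert g_{12}\Vert^2$ into the explicit expression $\sum_i \pi_i^4/\Vert\pi\Vert^4 - 1/n^2$, which is then controlled by the same degree bounds. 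For \eqref{eq:g21up}, the computation $(P\otimes P)(E-I)\mathbf{\underline{1}}_{n^2} = -\vek(PP^t)$ gives $n^2\Vert g_{21}\Vert^2 = \Vert PP^t\Vert_F^2 - 1$; splitting $\Vert PP^t\Vert_F^2$ into a diagonal part ($\sum_i (\deg i)^{-2} = O(n/d^2) = O(n^{1-2\beta})$, negligible for $\beta>\tfrac12$) and an off-diagonal part (at most $n(n-1)(1+\nu_2)^2/(n^2(1-\nu_1)^4)$ on $F_{\nu_1,\nu_2}$ by \eqref{eq:R_2def}) closes the bound.

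The two operator-norm bounds should reduce to interpolation and Lemma~\ref{lem:NormPerturbation}. For \eqref{eq:Sigmaup}, the triangle inequality gives $\Vert\Sigma_2\Vert = \Vert L\Vert \leq 1 + \Vert P\Vert^2$, and Riesz--Thorin interpolation combined with stochasticity ($\Vert P\Vert_{\infty\to\infty}=1$) and the bound $\Vert P\Vert_{1\to 1} = \max_j \sum_i A_{ij}/\deg i \leq (1+\nu_1)/(1-\nu_1)$ on $F_{\nu_1}$ yields $\Vert L\Vert \leq 2/(1-\nu_1) \leq 2+\varepsilon_1$. For \eqref{eq:G22up}, $G_{22}$ is a compression of $(P\otimes P)(E-I)$ to mutually orthogonal subspaces, hence $\Vert G_{22}\Vert\leq \Vert(P\otimes P)(E-I)\Vert$, and applying \eqref{eq:normperturbationupper} and keeping only the $1/n$-term (the $1/d^2$-term being lower order for $\beta>\tfrac12$) gives the estimate.

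The main obstacle will be \eqref{eq:Sigmainv}, which is the only place the spectral event $F$ is used. The aim is to show that, apart from the trivial $\sigma_{n^2}(L)=0$, every singular value of $L = I - P\otimes P$ is close to $1$. My plan is to exploit that on $F\cap F_{\nu_1}$ the transition matrix $P$ is effectively rank one: I will first transfer $\sigma_2(A/\sqrt{d})\leq 8$ into $\sigma_2(P) = O(1/\sqrt{d})$ via the similarity $P = D^{-1/2}\widetilde P D^{1/2}$ with $\widetilde P := D^{-1/2}AD^{-1/2}$ and $D^{\pm 1/2}/\sqrt{d}$ close to the identity on $F_{\nu_1}$. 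Then every non-top singular value of $P\otimes P$ is $O(1/\sqrt{d})$, and writing $P\otimes P = \sigma_1(P)^2 uv^t + R$ with $\Vert R\Vert = O(1/\sqrt{d})$, a direct $2\times 2$ computation in the plane spanned by $u,v$ shows that $I - \sigma_1(P)^2 uv^t$ has $n^2-2$ singular values exactly equal to $1$ together with $0$ (the stationary kernel) and one further value in $[1-O(\nu_1), 1+O(\nu_1)]$. Weyl's inequality for singular values then transports this to $L$ with an additional error $O(\Vert R\Vert)$, yielding $\sigma_{n^2-1}(L) = 1 + O(1/\sqrt{d} + \nu_1)$. The delicate point is exactly the passage from $\sigma_i(A)$ to $\sigma_i(P)$ in the absence of symmetry, which I would handle by an Ostrowski/Weyl-type perturbation argument using that $D^{\pm 1/2}/\sqrt{d} - I$ has norm $O(\nu_1)$.
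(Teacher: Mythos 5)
Your overall plan of proving each bound deterministically on the conditioning event and then tuning $\nu_1,\nu_2, N_1$ at the end is exactly what the paper does. Items \eqref{eq:invariantup}, \eqref{eq:g12up}, \eqref{eq:g21up}, \eqref{eq:G22up} proceed essentially as in the paper: your identities $\Vert g_{12}\Vert^2 = \sum_i\pi_i^4/\Vert\pi\Vert^4 - 1/n^2$ and $n^2\Vert g_{21}\Vert^2 = \Vert PP^t\Vert_F^2 - 1$ (via $(P\otimes P)(E-I)\mathbf{\underline{1}}_{n^2}=-\vek(PP^t)$) match the paper's $\tilde\gamma_{11}$-based computations, and for $G_{22}$ both arguments pass through $\Vert G_{22}\Vert\le\Vert(P\otimes P)(E-I)\Vert$ and Lemma~\ref{lem:NormPerturbation}. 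For \eqref{eq:Sigmaup} you use Riesz--Thorin $\Vert P\Vert^2\le\Vert P\Vert_{1\to1}\Vert P\Vert_{\infty\to\infty}$, whereas the paper bounds $\sigma_1(P\otimes P)\le \sigma_1(A)^2/(d^2(1-R_1)^2)$ with $\sigma_1(A)\le\max_i\deg i$; both work and give comparable constants.

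The real divergence, and the place your plan has a genuine gap, is \eqref{eq:Sigmainv}. You propose to write $P\otimes P = \sigma_1(P)^2 uv^t + R$ with $\Vert R\Vert = O(1/\sqrt{d})$, claim that the rank-one defect $I-\sigma_1(P)^2 uv^t$ has a singular value ``$0$ (the stationary kernel)'' plus $n^2-2$ ones and one value near $1$, and then transfer by Weyl. But $\det(I-\sigma_1(P)^2 uv^t)=1-\sigma_1(P)^2(u\cdot v)$, which is not zero: the top singular pair $(u,v)$ of $P\otimes P$ is not the kernel pair $(u_{n^2},v_{n^2})$ of $L$, and $P$ is not normal. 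So the ``$2\times 2$ computation'' as stated is wrong. You would additionally have to quantify $1-\sigma_1(P)^2(u_P\cdot v_P)^2 = O(\nu_1 + 1/\sqrt d)$ (using your $D^{1/2}$-similarity idea), after which the $2\times 2$ eigenvalue computation yields one singular value that is only approximately $0$ and one that is $1+O(\nu_1+1/\sqrt d)$. This is fixable but noticeably heavier than what is needed. The paper's route is simpler and cleaner: write $P\otimes P = Q_1 + Q_2$ with $Q_1$ the top rank-one singular piece, so $L = (I-Q_2) - Q_1$; the rank-one singular-value interlacing $\sigma_{n^2}(I-Q_2)\le\sigma_{n^2-1}(L)\le\sigma_{n^2-2}(I-Q_2)$ together with $\Vert Q_2\Vert = \sigma_2(P\otimes P)$ gives $(1-\sigma_2(P\otimes P))^2 \le \Vert\Sigma_2^{-2}\Vert^{-1}\le (1+\sigma_2(P\otimes P))^2$ immediately, and $\sigma_2(P\otimes P)$ is then bounded by $\sigma_2(CD)\le\Vert C\Vert\sigma_2(D)$ with $C=D^{-1}\otimes D^{-1}$, $D=A\otimes A$, without ever needing a similarity-transfer for singular vectors.

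Two smaller remarks. First, you should state which index convention you use for $\sigma_2$: the paper's final inequality $\sigma_2(P\otimes P)\le 8(1+\varepsilon_1)/\sqrt d$ concerns the \emph{second-largest} singular value. Second, in \eqref{eq:g21up} remember that the target is $(1+\varepsilon_1)^2-1$, so the diagonal contribution $\sum_i (PP^t)_{ii}^2 = O(n^{1-2\beta})$ must actually be absorbed into the $\varepsilon_1$-slack (this is where $\beta>\tfrac12$ is used and where $N_1$ grows as $\varepsilon_1\to 0$), not merely declared ``negligible.''
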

\begin{proof}
Let $\varepsilon_1 > 0$, and define the $\nu_1$ and $\nu_2$ by
\begin{equation}\label{eq:nudefs}
\nu_1 = \nu_2 = \frac{\left(\varepsilon_1 + 1\right)^\frac{1}{4} - 1}{\left(\varepsilon_1 + 1\right)^\frac{1}{4} + 1}.
\end{equation}
and recall that conditioning on $F_{\nu_1,\nu_2}$ and $F$ gives the conditions that $R_1 \leq \nu_1$, $R_2 \leq \nu_2$, and $\sigma_2\left(A\right) \leq 8\sqrt{d}$. This choice for $\nu_1$ and $\nu_2$ is not optimal, but it ensures that $\nu_1$ and $\nu_2$ are positive and bounded by $1$. Furthermore, 
\begin{equation}\label{eq:maxepsilon}
        \max\left\{\frac{(1+\nu_1)^2}{(1-\nu_1)^4}, \frac{(1+\nu_2)^2}{(1-\nu_1)^4}, \frac{(1+\nu_1)^4}{(1-\nu_1)^4}\right\} \leq 1 + \varepsilon_1.
\end{equation}
Finally let
\begin{equation}\label{eq:N_1def}
    N_1 = \max\left\{c^{\frac{1}{1-\beta}} ,\left(\frac{1}{c^2\varepsilon_1}\right)^{\frac{1}{2\beta - 1}}, \left(\frac{64(1+\varepsilon_1)^2}{c\varepsilon_1^2}\right)^{\frac{1}{\beta}}\right\},
\end{equation}
and assume that $n\geq N_1$. Note that the condition $n \geq c^{\frac{1}{1-\beta}}$ ensures that
\begin{equation}
    p = cn^{\beta - 1} \leq 1.
\end{equation}
    We start by proving \eqref{eq:Sigmaup}. The norm of the matrix $\Sigma_2$ can be bounded using a standard inequality for singular values,
    \begin{equation}
    \begin{aligned}
        \Vert \Sigma_2 \Vert = \sigma_1(L) \leq \sigma_1(I) + \sigma_{1}(P\otimes P)&\leq 1 + \frac{\sigma_1(A)^2}{d^2(1 - R_1)^2}.
    \end{aligned}
    \end{equation}
    Note that the maximal singular value of the adjacency matrix $A$ is at most the maximal degree of the graph, so that by \eqref{eq:maxmindegree},
    \begin{equation}\label{eq:upperSigma2}
        \Vert \Sigma_2 \Vert \leq 1 + \left(\frac{1 + R_1}{1 - R_1}\right)^2 < 1 + \left(\frac{1 + \nu_1}{1 - \nu_1}\right)^2 \leq 2 + \varepsilon_1.
    \end{equation}
    Next, we prove \eqref{eq:g12up}. The norm of $g_{12}$ can be bounded by observing that
\begin{equation}\label{eq:normg12}
\begin{aligned}
    n\Vert g_{12} \Vert^2 &= nu_1^t (P\otimes P)(E-I) V_2 V_2^t (E-I)^t(P\otimes P)^t u_1 \\
    &= nu_1^t (P\otimes P)(E-I)\left(I - v_1v_1^t\right) (E-I)^t(P\otimes P)^t u_1 \\
    &= n\frac{\left(\pi\otimes\pi\right)^t (I - E) \left(\pi\otimes\pi\right)}{\Vert \pi\otimes\pi\Vert^2} - \gamma_{11}^2 \\
    &= n\frac{\sum_{i} (1 + \epsilon_i)^4}{\sum_{(i,j)} (1+\epsilon_i)^2(1+\epsilon_j)^2} - \frac{1}{n} \\
    &\leq  \left(\frac{1 + R_1}{1 - R_1}\right)^4 < \left(\frac{1 + \nu_1}{1 - \nu_1}\right)^4 \leq 1 + \varepsilon_1.
\end{aligned} 
\end{equation}
To prove \eqref{eq:invariantup}, note that
\begin{equation}
    n\Vert\pi\Vert^2 = n\sum_{i} \pi_i^2 \leq \frac{(1+R_1)^2}{(1-R_1)^2} \leq 1 + \varepsilon_1.
\end{equation}
Whereas to show \eqref{eq:G22up}, we observe that
\begin{equation}
    \begin{aligned}
       \Vert G_{22} \Vert \leq \Vert U_2\Vert \Vert (P\otimes P)(E-I)\Vert \Vert V_2\Vert \leq \Vert (P\otimes P)(E-I)\Vert.
    \end{aligned}
    \end{equation}
    Using Lemma \ref{lem:NormPerturbation}, it can be seen that,
    \begin{equation}\label{eq:normG22upper}
    \begin{aligned}
        n\Vert G_{22} \Vert^2 &\leq \frac{n(1+R_1)^2}{d^2(1-R_1)^4} + \frac{(1+R_2)^2}{(1-R_1)^4} \\
        &< \frac{n(1+\nu_1)^2}{d^2(1-\nu_1)^4} + \frac{(1+\nu_2)^2}{(1-\nu_1)^4} \\
        &\leq \left(\frac{n}{d^2} + 1\right) (1 + \varepsilon_1).
    \end{aligned}
    \end{equation}
    For $n\geq N_1$, we have in particular that $n\geq\left(\frac{1}{c^2\varepsilon_1}\right)^{\frac{1}{2\beta - 1}}$, so that
    \begin{equation}\label{eq:nd2up}
        \frac{n}{d^2} = \frac{1}{c^2}n^{1-2\beta}\leq\varepsilon_1.
    \end{equation}
    Hence, we can conclude that
    \begin{equation}
        \sqrt{n}\Vert G_{22} \Vert \leq 1 + \varepsilon_1.
    \end{equation}
        Next, we prove \eqref{eq:g21up}. Recall that 
    \begin{equation}
        n^2\Vert g_{21} \Vert^2 = n^2\widetilde{\gamma}_{11}^2 - 1.
    \end{equation}
    We can upper-bound the expression above by observing that
    \begin{equation}
    \begin{aligned}\label{eq:upperleading}
        n^2\widetilde{\gamma}_{11}^2 &\leq \frac{1}{(1-R_1)^4}\frac{1}{d^4}\sum_{(k,\ell)} \left(\sum_{i} \I\{i\sim k\}\I\{i\sim \ell\}\right)^2 \\
        &\leq \frac{1}{(1-R_1)^4} \left(2\binom{n}{2}\frac{1}{n^2}(1 + R_2)^2 + \frac{1}{np^2} (1+R_1)^2\right) \\
        &< \frac{(1+\nu_2)^2}{(1-\nu_1)^4} + \frac{1}{np^2} \frac{(1+\nu_1)^2}{(1 - \nu_1)^4} \leq (1 + \varepsilon_1)^2,
    \end{aligned}
    \end{equation}
    where the last inequality follows again from \eqref{eq:nd2up}. Finally, we prove \eqref{eq:Sigmainv}. It holds that
\begin{equation}
    \begin{aligned}
        \Vert \Sigma_2^{-2} \Vert^{-1} &= \sigma_{\text{min}}(\Sigma_2)^2 \geq (1 - \sigma_2(P\otimes P))^2, 
    \end{aligned}
\end{equation}
and
\begin{equation}
    \begin{aligned}
        \Vert \Sigma_2^{-2} \Vert^{-1} \leq (1 + \sigma_2(P\otimes P))^2.
    \end{aligned}
\end{equation}
Note that the second least singular value of $P\otimes P$ can be bounded using
\begin{equation}
    \sigma_2(P\otimes P) \leq \frac{1}{d^2(1 - R_1)^2} \sigma_2(A \otimes A) \leq \frac{(1 + R_1)}{\sqrt{d}}\sigma_2\left(\frac{A}{\sqrt{d}}\right)\leq \frac{8}{\sqrt{d}} (1 + \varepsilon_1).
\end{equation}
For $n\geq N_1$, we have in particular that $n\geq\left(\frac{64(1+\varepsilon_1)^2}{c\epsilon^2}\right)^{\frac{1}{\beta}}$, so that
    \begin{equation}
        \frac{8}{\sqrt{d}}(1+\varepsilon_1) \leq \frac{8}{\sqrt{c\left(\frac{64(1+\varepsilon_1)^2}{c\epsilon^2}\right)}} \leq \varepsilon_1.
    \end{equation}
    Therefore, the result follows.
\end{proof}
We use Lemma~\ref{lem:normestimates} to bound the terms of the rank-one approximation $\hat{t}_{\mathrm{meet}}^{\pi,(1)}$ and the error approximation in Proposition~\ref{lem:rankapprox}.
\begin{lemma}\label{lem:perturbedsigma}
Let $\beta > \frac{1}{2}$. For every $\varepsilon_2 > 0$, there exists $N_2\in\N$ and constants $\nu_1, \nu_2 > 0$ such that, for all $n \geq N_2$,
\begin{align}
\label{eq:sigmaperturbed}
&\P\left(\bigg|\frac{1}{n\sigmaPerturbed_{n^2}} -1\bigg|\leq \varepsilon_2 \mid F_{\nu_1, \nu_2}\cap F\right) = 1 
\\
\label{eq:projector}
&\P\left(|\left(\pi\otimes\pi\right)^t\vPerturbed_{n^2}\uPerturbed_{n^2}^t\mathbf{\underline{1}}_{n^2} - 1|\leq \varepsilon_2 \mid F_{\nu_1, \nu_2}\cap F\right) = 1
\\
\label{eq:error}
&\P\left(\frac{n\Vert\pi\Vert^2}{n\sigmaPerturbed_{n^2-1}}\leq \varepsilon_2 \mid F_{\nu_1}\cap F\right) = 1
\end{align}
\end{lemma}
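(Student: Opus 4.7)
The plan is to prove each of the three bounds by combining the matrix perturbation analysis of Section~\ref{sec:matrixperturbation} with the norm estimates of Lemma~\ref{lem:normestimates}. Throughout, I choose the auxiliary parameter $\varepsilon_1$ in Lemma~\ref{lem:normestimates} small enough in terms of $\varepsilon_2$, and $N_2$ large enough, so that all lower-order terms below are controlled by $\varepsilon_2$.

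For \eqref{eq:sigmaperturbed}, I start from the two-sided bound \eqref{eq:uplowsigma}, which yields $\sigmaPerturbed_{n^2}^2 = \tilde{\gamma}_{11}^2 \pm 2\Vert\Delta_{12}\Vert^2/\delta$. The leading term decomposes as $n^2\tilde{\gamma}_{11}^2 = 1 + n^2\Vert g_{21}\Vert^2$, which lies in $[1,(1+\varepsilon_1)^2]$ by \eqref{eq:g21up}. For the error term, I expand $\Delta_{12} = g_{21}^t\Sigma_2 + \gamma_{11} g_{12}^t + g_{21}^t G_{22}$ and insert the bounds of Lemma~\ref{lem:normestimates} term by term, obtaining $\Vert\Delta_{12}\Vert = O(\sqrt{\varepsilon_1}/n)$ on $F_{\nu_1,\nu_2}$; the denominator is controlled via $\delta \geq \Vert\Sigma_2^{-2}\Vert^{-1} - \Vert\Delta_{11}\Vert - \Vert\Delta_{22}\Vert \geq (1-\varepsilon_1)^2 - O(n^{-2}) - O(n^{-1/2})$ using \eqref{eq:Sigmainv}, which is bounded away from zero for $n$ large. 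These together give $n^2\sigmaPerturbed_{n^2}^2 = 1 + O(\sqrt{\varepsilon_1})$, and taking square roots yields \eqref{eq:sigmaperturbed}.

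For \eqref{eq:projector}, the Stewart invariant-subspace identity that underlies \eqref{eq:sigmaperturbedexact} also identifies $\vPerturbed_{n^2} = (v_{n^2} + V_2 Q)/\sqrt{1+\Vert Q\Vert^2}$ with $\Vert Q\Vert \leq 2\Vert\Delta_{12}\Vert/\delta = O(\sqrt{\varepsilon_1}/n)$. Applying the same theorem to $\LKilled\LKilled^t$ in place of $\LKilled^t\LKilled$ produces an analogous expansion $\uPerturbed_{n^2} = (u_{n^2} + U_2 Q')/\sqrt{1+\Vert Q'\Vert^2}$ with $\Vert Q'\Vert$ of the same order. Computing the two scalar factors separately using \eqref{eq:unperturbedvectors} gives $(\pi\otimes\pi)^t v_{n^2} = 1/n$ and $u_{n^2}^t \mathbf{\underline{1}}_{n^2} = 1/\Vert\pi\Vert^2$, so the leading contribution to the target quantity equals $1/(n\Vert\pi\Vert^2)$; a two-sided version of \eqref{eq:invariantup}, obtained by the same degree-concentration argument, shows $n\Vert\pi\Vert^2 = 1 + O(\varepsilon_1)$, so this leading part is $1 + O(\varepsilon_1)$. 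The cross terms $(\pi\otimes\pi)^t V_2 Q$ and $(Q')^t U_2^t\mathbf{\underline{1}}_{n^2}$ are controlled by Cauchy--Schwarz and contribute $O(\sqrt{\varepsilon_1})$ in relative size, proving \eqref{eq:projector}.

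For \eqref{eq:error}, Weyl's inequality for singular values gives $|\sigmaPerturbed_{n^2-1} - \sigma_{n^2-1}| \leq \Vert(P\otimes P)(E-I)\Vert$. By \eqref{eq:Sigmainv} we have $\sigma_{n^2-1} \geq 1-\varepsilon_1$, while Lemma~\ref{lem:NormPerturbation} gives $\Vert(P\otimes P)(E-I)\Vert = O(n^{-1/2})$; thus $\sigmaPerturbed_{n^2-1} \geq (1-\varepsilon_1)/2$ for $n$ large. Combined with \eqref{eq:invariantup}, this yields $\frac{n\Vert\pi\Vert^2}{n\sigmaPerturbed_{n^2-1}} = O(1/n)$, which is $\leq \varepsilon_2$ for $n \geq N_2$ sufficiently large. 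The main obstacle is expected to be \eqref{eq:projector}: one must simultaneously apply the Stewart expansion to both $\LKilled^t\LKilled$ and $\LKilled\LKilled^t$, consistently orient $\vPerturbed_{n^2}$ and $\uPerturbed_{n^2}$ so that the leading contribution is $+1/(n\Vert\pi\Vert^2)$ rather than its negative, and upgrade the upper bound on $n\Vert\pi\Vert^2$ in Lemma~\ref{lem:normestimates} to a matching lower bound so that this leading part is provably close to $1$.
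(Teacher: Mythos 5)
Your proposal follows essentially the same route as the paper: both use the two-sided bound \eqref{eq:uplowsigma} with the decomposition $n^2\widetilde\gamma_{11}^2 = 1 + n^2\Vert g_{21}\Vert^2$ for \eqref{eq:sigmaperturbed}, both expand $\vPerturbed_{n^2}$ and $\uPerturbed_{n^2}$ via Theorem~\ref{thm:Stewart-perturbation} applied to $\LKilled^t\LKilled$ and $\LKilled\LKilled^t$ respectively for \eqref{eq:projector}, and both compare $\sigmaPerturbed_{n^2-1}$ against $\sigma_{n^2-1}(L)\geq 1-\sigma_2(P\otimes P)$ for \eqref{eq:error}. Two points, however, deserve correction.

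First, your claim that $\Vert Q'\Vert$ (the paper's $W$) is ``of the same order'' as $\Vert Q\Vert=O(\sqrt{\varepsilon_1}/n)$ is not accurate. The two perturbation blocks driving $Q$ and $W$ are different: $\Delta_{21}=\Sigma_2 g_{21}+\gamma_{11}g_{12}+G_{22}^t g_{21}$ is dominated by $g_{21}$ with $\Vert g_{21}\Vert=O(\sqrt{\varepsilon_1}/n)$ by \eqref{eq:g21up}, whereas for $\LKilled\LKilled^t$ the corresponding off-diagonal block is $\gamma_{11}g_{21}^t+g_{12}^t\Sigma_2+g_{12}^tG_{22}$, dominated by $g_{12}$ with $\Vert g_{12}\Vert=O(1/\sqrt{n})$ by \eqref{eq:g12up}. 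Thus $\Vert W\Vert=O(1/\sqrt{n})$, not $O(\sqrt{\varepsilon_1}/n)$. This does not break your argument, because for $n\geq N_2$ one can still make $\Vert W\Vert\leq\varepsilon_1$, but the ``same order'' statement is wrong and would mislead a reader trying to check the $n$-dependence.

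Second, you flag a need to ``upgrade'' the one-sided bound $n\Vert\pi\Vert^2\leq 1+\varepsilon_1$ to a two-sided one. That concern is easily dispatched: Cauchy--Schwarz gives $1=(\sum_i\pi_i)^2\leq n\sum_i\pi_i^2=n\Vert\pi\Vert^2$ deterministically, so $1\leq n\Vert\pi\Vert^2\leq 1+\varepsilon_1$ is already available. (The paper in fact sidesteps this altogether for the upper direction of \eqref{eq:projector} by applying Cauchy--Schwarz directly to the projector quantity, $(\pi\otimes\pi)^t\vPerturbed_{n^2}\uPerturbed_{n^2}^t\mathbf{\underline{1}}_{n^2}\leq\Vert\pi\otimes\pi\Vert\cdot\Vert\mathbf{\underline{1}}_{n^2}\Vert=n\Vert\pi\Vert^2$, and invoking the Stewart expansion only for the lower direction.) Apart from these points your outline is sound and matches the paper's strategy.
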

\begin{proof} Let $\varepsilon_1 \in \left(0,\frac{1}{20}\right)$, $\nu_1, \nu_2$ be defined as in \eqref{eq:nudefs}, recall that conditioning on $F_{\nu_1,\nu_2}$ and $F$ gives the conditions that $R_1 \leq \nu_1$, $R_2 \leq \nu_2$, and $\sigma_2\left(A\right) \leq 8\sqrt{d}$. These conditions allow us to use the estimates in Lemma~\ref{lem:normestimates}. First we note that
    \begin{equation}
        n\widetilde{\gamma}_{11}^2 = 1 + n^2\Vert g_{21}\Vert^2 \geq 1.
    \end{equation}
    Therefore, the perturbed least singular value can be upper-bounded by
    \begin{equation}
        \frac{1}{n\sigmaPerturbed_{n^2}} \leq \left(n^2\widetilde{\gamma}_{11}^2 - 2n^2\frac{\Vert\Delta_{21}\Vert^2}{\delta}\right)^{-\frac{1}{2}} \leq \left(1 - 2n^2\frac{\Vert\Delta_{21}\Vert^2}{\delta}\right)^{-\frac{1}{2}},
    \end{equation}
    and lower-bounded by
    \begin{equation}
        \frac{1}{n\sigmaPerturbed_{n^2}} \geq \left(1 + n^2\Vert g_{21}\Vert^2+ 2n^2\frac{\Vert\Delta_{21}\Vert^2}{\delta}\right)^{-\frac{1}{2}}.
    \end{equation}
We will use the estimates from Lemma \ref{lem:normestimates} to conclude that for every $\varepsilon_1 \in \left(0,\frac{1}{20}\right)$, there exist constants $\nu_1, \nu_2 > 0$ and $N_1\in\R_+$ such that for $n\geq N_1$, conditioned on $F_{\nu_1,\nu_2}$ we have,
    %\begin{align}
     %   &n\Vert g_{21}\Vert\Vert\Sigma_2\Vert \leq (2+\varepsilon_1)\left((1+\varepsilon_1)^2 - 1\right)\\
      %  &n\Vert g_{21}\Vert\Vert G_{22}\Vert \leq \frac{(1+\varepsilon_1)}{\sqrt{n}}\left((1+\varepsilon_1)^2 - 1\right) \\
       % &n\Vert \gamma_{11}\Vert\Vert g_{21}\Vert \leq \sqrt{\frac{(1+\varepsilon_1)}{n}}.
   % \end{align}
    \begin{equation}\label{eq:Delta21upper}
        \begin{aligned}
            n\Vert\Delta_{21}\Vert &\leq n\Vert g_{21}\Vert \Vert \Sigma_2 \Vert +  \Vert g_{12} \Vert + n\Vert g_{21} \Vert \Vert G_{22}\Vert \\
            &\leq 5\sqrt{\varepsilon_1}.
        \end{aligned}
    \end{equation}
    and
\begin{equation}
    n^2\Vert g_{21}\Vert^2 \leq \varepsilon_1(2+\varepsilon_1).
\end{equation}
    Furthermore, conditioned on $F_{\nu_1,\nu_2}$ and $F$ we have,
    \begin{equation}\label{eq:deltadef}
    \begin{aligned}
        \delta &= \text{sep}(B_{11}, B_{22}) - \Vert \Delta_{11} \Vert - \Vert \Delta_{22}\Vert \\
        &= \Vert \Sigma^{-2}_2\Vert^{-1} - \widetilde{\gamma}_{11}^2 - \Vert G_{22}^t\Sigma_2 + \Sigma_2G_{22} + G_{22}^tG_{22} + g_{12}g_{12}^t\Vert \\
        &\geq \Vert \Sigma^{-2}_2\Vert^{-1} - \widetilde{\gamma}_{11}^2 - 2\Vert G_{22}\Vert\Vert\Sigma_2\Vert - \Vert G_{22}\Vert^2 - \Vert g_{12}\Vert^2 \\
        &\geq 1 - 5\varepsilon_1.
    \end{aligned}
\end{equation}
Therefore,
\begin{equation}
    \frac{1}{n\sigmaPerturbed_{n^2}} \leq \left(1 - \frac{50\varepsilon_1}{1-5\varepsilon_1}\right)^{-\frac{1}{2}} \quad \text{and} \quad \frac{1}{n\sigmaPerturbed_{n^2}} \geq \left(1 + \frac{50\varepsilon_1}{1-5\varepsilon_1} + \varepsilon_1(2+\varepsilon_1)\right)^{-\frac{1}{2}}.
\end{equation}
First, note that by Cauchy-Schwarz,
    \begin{equation}
\left(\pi\otimes\pi\right)^t\vPerturbed_{n^2}\uPerturbed_{n^2}^t\mathbf{\underline{1}}_{n^2} - 1 \leq n\Vert\pi\Vert^2 - 1 \leq \varepsilon_1.
    \end{equation}
    By the matrix perturbation Theorem \ref{thm:Stewart-perturbation}, we can write
    \begin{equation}
        \vPerturbed_{(n,n)} = \left(v_{(n,n)} + V_2Q\right)\left(1 + \Vert Q\Vert^2\right)^{-\frac{1}{2}},
    \end{equation}
    where $\Vert Q\Vert \leq \frac{2\Vert\Delta_{21}\Vert}{\delta}$. Applying Theorem \ref{thm:Stewart-perturbation} to $A = LL^t$, we find that there exists a matrix $W$ with 
    \begin{equation}
        \Vert W\Vert\leq\frac{2}{\delta_W}\Vert\gamma_{11}g_{21}^t + g_{12}^t\Sigma_2 + g_{12}^tG_{22}\Vert
    \end{equation}
    such that
    \begin{equation}
        \uPerturbed_{(n,n)} = \left(u_{(n,n)} + U_2W\right)\left(1 + \Vert W\Vert^2\right)^{-\frac{1}{2}}.
    \end{equation}
    The parameter $\delta_W$ is given by
\begin{equation}
    \begin{aligned}
        \delta_W = \Vert \Sigma^{-2}_2\Vert^{-1} - \gamma_{11}^2 - \Vert g_{12}\Vert^2 - \Vert G_{22}^t\Sigma_2 + \Sigma_2G_{22} + G_{22}G_{22}^t + g_{21}g_{21}^t\Vert.
    \end{aligned}
\end{equation}
    Note that we can directly compute the unperturbed projector through
    \begin{equation}
        |\left(\pi\otimes\pi\right)^tv_{(n,n)}u_{(n,n)}^t\mathbf{\underline{1}}_{n^2}| = \frac{1}{n\Vert\pi\Vert}
        .
    \end{equation}
    Furthermore, we note that
    \begin{equation}
|\left(\pi\otimes\pi\right)^tv_{(n,n)}W^tU_2^t\mathbf{\underline{1}}_{n^2}| \leq \frac{1}{n}\Vert W\Vert\Vert U_2\Vert n \leq \Vert W\Vert,
    \end{equation}
        \begin{equation}
|\left(\pi\otimes\pi\right)^tV_2Qu_{(n,n)}^t\mathbf{\underline{1}}_{n^2}| \leq \frac{1}{\Vert\pi\Vert^2}\Vert\pi\otimes\pi\Vert\Vert V_2\Vert \Vert Q\Vert \leq \Vert Q\Vert,
    \end{equation}
    \begin{equation}
        |\left(\pi\otimes\pi\right)^tV_2QW^tU_2^t\mathbf{\underline{1}}_{n^2}| \leq n\Vert\pi\Vert^2\Vert V_2\Vert \Vert Q\Vert\Vert W\Vert\Vert U_2\Vert \leq n\Vert\pi\Vert^2 \Vert Q\Vert\Vert W\Vert.
    \end{equation}
    So that in total we obtain,
    \begin{equation}
        \left(\pi\otimes\pi\right)^t\vPerturbed_{(n,n)}\uPerturbed_{(n,n)}^t\mathbf{\underline{1}}_{n^2} \geq \frac{\left(\frac{1}{n\Vert\pi\Vert^2} - \Vert W\Vert - \Vert Q\Vert - n\Vert\pi\Vert^2\Vert W\Vert\Vert Q\Vert\right)}{(1 + \Vert Q\Vert^2)^{\frac{1}{2}}(1 + \Vert W\Vert^2)^{\frac{1}{2}}}.
    \end{equation}
    The norm of $Q$ can be bounded using \eqref{eq:Delta21upper},
    \begin{equation}
        \Vert Q\Vert \leq \frac{2\Vert\Delta_{21}\Vert}{\delta} \leq \frac{10\sqrt{\varepsilon_1}}{(1-5\varepsilon_1)n}.
    \end{equation}
    The norm of $W$ is bounded from above by
        \begin{equation}
        \Vert W\Vert \leq \frac{2\Vert\gamma_{11}g_{21}^t + g_{12}\Sigma_2 + g_{12}^tG_{22}\Vert}{\delta_W} \leq \frac{8(1+\varepsilon_1)^2}{(1-5\varepsilon_1)\sqrt{n}}.
    \end{equation}
    Note that for
    \begin{equation}
        n \geq \max\left\{\frac{10\sqrt{\varepsilon_1}}{(1-5\varepsilon_1)\varepsilon_1}, \left(\frac{8(1+\varepsilon_1)}{\varepsilon_1}\right)^2\right\},
    \end{equation}
     we have that $\Vert Q\Vert \leq \varepsilon_1$ and $\Vert W\Vert \leq \varepsilon_1$. Therefore,
    \begin{equation}
\left(\pi\otimes\pi\right)^t\vPerturbed_{(n,n)}\uPerturbed_{(n,n)}^t\mathbf{\underline{1}}_{n^2} \geq \frac{\left(\frac{1}{(1 + \varepsilon_1)} - 2\varepsilon_1 - (1+\varepsilon_1)\varepsilon_1^2\right)}{(1 + \varepsilon_1^2)}.
    \end{equation}
    Finally, we see that
    \begin{equation}
        \frac{n\Vert\pi\Vert^2}{n\sigmaPerturbed_{n^2-1}} \leq \frac{n\Vert\pi\Vert^2}{n(1 - \sigma_2(P\otimes P)))} \leq \frac{1+\varepsilon_1}{n(1-\varepsilon_1)}.
    \end{equation}
    Therefore, for $n \geq \frac{1+\varepsilon_1}{\varepsilon_1(1-\varepsilon_1)}$, 
    \begin{equation}
        \frac{n\Vert\pi\Vert^2}{n\sigmaPerturbed_{n^2-1}} \leq \varepsilon_1.
    \end{equation}
    The result follows from observing that for every $\varepsilon_2 > 0$, there exists $\varepsilon_1 \in \left(0,\frac{1}{20}\right)$ such that
    \begin{equation}
    \begin{aligned}
          &\frac{\left(\frac{1}{(1 + \varepsilon_1)} - 2\varepsilon_1 - (1+\varepsilon_1)\varepsilon_1^2\right)}{(1 + \varepsilon_1^2)} \geq 1-\varepsilon_2, \qquad  \varepsilon_1 \leq \varepsilon_2 \\
    &\left(1 - \frac{50\varepsilon_1}{1-5\varepsilon_1}\right)^{-\frac{1}{2}} \leq 1 + \varepsilon_2 \quad \text{and} \quad \left(1 + \frac{50\varepsilon_1}{1-5\varepsilon_1} + \varepsilon_1(2+\varepsilon_1)\right)^{-\frac{1}{2}} \geq 1 - \varepsilon_2.
    \end{aligned}
\end{equation}
Setting
\begin{equation}
    N_2 = \max\left\{N_1, \frac{10\sqrt{\varepsilon_1}}{(1-5\varepsilon_1)\varepsilon_1}, \left(\frac{8(1+\varepsilon_1)}{\varepsilon_1}\right)^2, \frac{1+\varepsilon_1}{\varepsilon_1(1-\varepsilon_1)} \right\},
\end{equation}
where $N_1$ is the constant in Lemma \ref{lem:normestimates}, we obtain the result.
\end{proof}
Finally, we are ready to prove Theorem \ref{thm:meetingtime}.
\begin{proof}[Proof of Theorem \ref{thm:meetingtime}]
Note that for any $\epsilon, \nu_1, \nu_2 > 0$ and $n \in \N$,
\begin{equation}
    \P\left(\bigg|\frac{1}{n}t_{\mathrm{meet}}^{\pi} - 1 \bigg| \leq \epsilon \right) \geq \P\left(\bigg|\frac{1}{n}t_{\mathrm{meet}}^{\pi} - 1 \bigg| \leq \epsilon \mid F_{\nu_1, \nu_2}\cap F\right) - \P\left(\left(F_{\nu_1, \nu_2}\cap F\right)^c\right).
\end{equation}
From Lemma \ref{lem:concentrationR} and Proposition \ref{prop:EKYYsingular}, it follows that there exists $\theta > 0$, such that for any $\nu_1, \nu_2 > 0$,
\begin{equation}
    \P\left(\left(F_{\nu_1, \nu_2}\cap F\right)^c\right) \leq 2n\exp\left(- \frac{\nu_1^2 np}{3}\right) + 2\binom{n}{2}\exp\left(- \frac{\nu_2^2 np^2}{3}\right) + \exp\left(-\theta \left(\log n\right)^{2}\right).
\end{equation}
Therefore, it remains to show that for every $\epsilon > 0$, there exist $\nu_1, \nu_2 > 0$ and $N \in \R_+$ such that for all $n\geq N$,
\begin{equation}\label{eq:proofremainder}
     \P\left(\bigg|\frac{1}{n}t_{\mathrm{meet}}^{\pi} - 1 \bigg| \leq \epsilon \mid F_{\nu_1, \nu_2}\cap F\right) = 1.
\end{equation}
We use the rank-$k$ perturbation result \ref{lem:rankapprox}, with $k = 1$, to deduce that, 
\begin{equation}
    \frac{1}{n}t_{\mathrm{meet}}^{\pi} \leq \frac{1}{n}\hat{t}_{\mathrm{meet}}^{\pi, (1)} + \frac{n\Vert\pi\Vert^2}{n\sigmaPerturbed_{n^2-1}} \leq \frac{1}{n\sigmaPerturbed_{n^2}}(\pi\otimes\pi)^t\vPerturbed_{n^2}\uPerturbed_{n^2}^t\mathbf{\underline{1}}_{n^2} + \frac{n\Vert\pi\Vert^2}{n\sigmaPerturbed_{n^2-1}}.
\end{equation}
\begin{equation}
    \frac{1}{n}t_{\mathrm{meet}}^{\pi} \geq \frac{1}{n}\hat{t}_{\mathrm{meet}}^{\pi, (1)} - \frac{n\Vert\pi\Vert^2}{n\sigmaPerturbed_{n^2-1}} \geq \frac{1}{n\sigmaPerturbed_{n^2}}(\pi\otimes\pi)^t\vPerturbed_{n^2}\uPerturbed_{n^2}^t\mathbf{\underline{1}}_{n^2} -\frac{1}{n} - \frac{n\Vert\pi\Vert^2}{n\sigmaPerturbed_{n^2 - 1}}.
\end{equation}
By Lemma \ref{lem:perturbedsigma}, for every $\varepsilon_2 > 0$, there exist constants $\nu_1, \nu_2 > 0$ and $N_2\in\R_+$ such that for all $n \geq N_2$, conditioned on $F_{\nu_1, \nu_2}$ and $F$ we have,
\begin{equation}
    \frac{1}{n}t_{\mathrm{meet}}^{\pi} \leq (1 + \varepsilon_2)^2 + \varepsilon_2.
\end{equation}
if additionally $n \geq \frac{1}{\varepsilon_2}$,
\begin{equation}
    \frac{1}{n}t_{\mathrm{meet}}^{\pi} \geq (1 - \varepsilon_2)^2 - 2\varepsilon_2.
\end{equation}
Equation \eqref{eq:proofremainder} follows from noting that for every $\epsilon > 0$, there exists $\varepsilon_2 > 0$ such that
\begin{equation}
    (1 + \varepsilon_2)^2 + \varepsilon_2 - 1 \leq \epsilon \quad \text{and} \quad 1 - (1 - \varepsilon_2)^2 + 2\varepsilon_2 \leq \epsilon.
\end{equation}
\end{proof}

\section{Discussion}
\label{sec:discussion}

We proposed an approach to estimate the meeting time of two independent random walks on graphs via SVD of the diagonally killed generator of the pair. Specifically, we viewed the diagonally killed generator as a perturbation of the (standard) generator of a pair of random walks. We devised non-asymptotic perturbation bounds on the expected meeting time via a rank-one approximation. Subsequently, we conducted non-asymptotic perturbation analysis of the rank one approximation to the diagonally killed generator. As a proof of the concept, we specialized to a sufficiently dense Erdős–Rényi random graph and obtained sharp bounds on the expected meeting time, cf.~Theorem~\ref{thm:meetingtime}. In Section~\ref{sec:dense-random}, we defined two graph characteristics $R_1$ and $R_2$ (see \eqref{eq:R_1def}, \eqref{eq:R_2def}) which we used to control the meeting-time. While our main Theorem~\ref{thm:meetingtime} is stated for sufficiently dense Erd\H{o}s–R\'enyi graphs, this dependence can be factored out to some extent. By inspecting the proof of Theorem~\ref{thm:meetingtime}, one can convince oneself that the theorem can be generalized as follows.

\begin{theorem}[Informal]
\label{thm:generalized}     
Let $(G_n)_{n\in\N}$ be a sequence of (random) graphs such that each $G_n$ has $n$ vertices. Suppose there exists a real sequence $(d_n)_{n\in\N}$ such that (w.h.p.), as $n \to \infty$, the conditions
\begin{enumerate}
    \item \label{i:r1} $\max_{i\in V_n}|\frac{\deg i}{d_n} - 1| \to 0$,
    \item \label{i:r2} $\max_{(k,\ell)\in V_n\times V_n}|\frac{n}{d_n^2}\left(\sum_{i} A_{ik}A_{i\ell}\right) - 1| \to 0$,
    \item \label{i:singular-gap} $\sigma_2(A) = O(\sqrt{d_n})$,
\end{enumerate}
hold, where $\sigma_2(A)$ is the second-largest eigenvalue of the adjacency matrix $A = A(G_n)$.
Then,
\begin{equation}
    \frac{1}{n}t_{\mathrm{meet}}^{\pi} \approx 1 \text{ (w.h.p.)}.
\end{equation}

\end{theorem}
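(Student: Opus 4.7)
The plan is to mimic the proof of Theorem \ref{thm:meetingtime}, replacing the three Erdős–Rényi specific concentration events $F_{\nu_1}$, $F_{\nu_1,\nu_2}$ and $F$ by the abstract hypotheses (i), (ii), (iii). The crucial observation is that the proofs of Lemmas \ref{lem:normestimates} and \ref{lem:perturbedsigma} never invoke the distribution of the edges of $\mathcal{G}(n,p_n)$ directly; they exploit only the deterministic bounds $R_1 \le \nu_1$, $R_2 \le \nu_2$ and $\sigma_2(A/\sqrt{d_n}) \le C$, together with the side condition $n/d_n^2 \to 0$ which is implicit in ``sufficiently dense''.

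First, for any fixed $\nu_1, \nu_2 > 0$ and $C > 2$, I would introduce the abstract good event
\begin{equation}
H_n = \{R_1 \le \nu_1\} \cap \{R_2 \le \nu_2\} \cap \{\sigma_2(A_n/\sqrt{d_n}) \le C\},
\end{equation}
and observe that conditions (i), (ii), (iii) imply $\P(H_n) \to 1$ as $n \to \infty$: conditions (i)–(ii) give $R_1, R_2 \to 0$ w.h.p., and condition (iii) forces $\sigma_2(A_n/\sqrt{d_n})$ to stay below any fixed $C > 2$ eventually w.h.p. This replaces, on the abstract side, both Lemma \ref{lem:concentrationR} and Proposition \ref{prop:EKYYsingular}, whose sole purpose in the Erdős–Rényi proof is to ensure that the same three bounds hold w.h.p.

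Second, I would revisit the proofs of Lemmas \ref{lem:normestimates} and \ref{lem:perturbedsigma} and verify that they are entirely algebraic on the event $H_n$. Each of the six estimates \eqref{eq:Sigmaup}–\eqref{eq:Sigmainv} is derived by plugging the inequalities $R_1 \le \nu_1$, $R_2 \le \nu_2$ and $\sigma_2(A/\sqrt{d_n}) \le C$ into explicit expressions for $\Sigma_2$, $g_{12}$, $g_{21}$, $G_{22}$ and $\Sigma_2^{-2}$, together with the asymptotic $n/d_n^2 \to 0$ used in \eqref{eq:nd2up}. These carry over verbatim. The sandwiching of $1/(n\sigmaPerturbed_{n^2})$, of the projector $(\pi\otimes\pi)^t\vPerturbed_{n^2}\uPerturbed_{n^2}^t\mathbf{\underline{1}}_{n^2}$ around $1$, and of the rank-$1$ approximation error $n\Vert \pi\Vert^2 / (n\sigmaPerturbed_{n^2-1})$ are then obtained by the same Stewart-perturbation bookkeeping as in Lemma \ref{lem:perturbedsigma}.

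Third, plugging these deterministic-conditional estimates into Proposition \ref{lem:rankapprox} with $k = 1$ yields $|n^{-1} t_{\text{meet}}^\pi - 1| \le \varepsilon$ on $H_n$ for any prescribed $\varepsilon > 0$ and all $n$ large enough, which combined with $\P(H_n) \to 1$ gives the claim. The main obstacle, and the reason the statement is called \emph{informal}, is pinning down the minimal additional quantitative hypothesis on $d_n$: the Erdős–Rényi proof implicitly uses $d_n = \omega(\sqrt{n})$ (coming from $\beta > \frac12$) to make $n/d_n^2$ negligible, and any rigorous version of Theorem \ref{thm:generalized} should include $n/d_n^2 \to 0$ among the hypotheses or derive it from (i)–(iii). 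Modulo this side condition, the proof is a mechanical translation of the Erdős–Rényi proof with the randomness of the graph factored out into the single abstract event $H_n$.
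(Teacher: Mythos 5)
Your proposal is correct and is essentially the same argument the paper has in mind: the paper does not give a standalone proof of Theorem~\ref{thm:generalized} at all, it merely says ``by inspecting the proof of Theorem~\ref{thm:meetingtime}, one can convince oneself,'' and your write-up makes precisely that inspection explicit. Conditions~\eqref{i:r1} and~\eqref{i:r2} are, up to naming, exactly the statements $R_1 \to 0$ and $R_2 \to 0$, so your abstract event $H_n$ plays the role of $F_{\nu_1,\nu_2}\cap F$; Lemma~\ref{lem:concentrationR} and Proposition~\ref{prop:EKYYsingular} exist in the paper only to make $\P(F_{\nu_1,\nu_2}\cap F)\to 1$ in the Erd\H{o}s--R\'enyi case, and you correctly replace that entire probabilistic layer by the abstract assumption $\P(H_n)\to 1$. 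Lemmas~\ref{lem:normestimates} and~\ref{lem:perturbedsigma} are, as you observe, purely deterministic on the good event, so they carry over unchanged.

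Your remark about the implicit hypothesis $n/d_n^2\to 0$ is the genuinely useful part of the write-up. The step \eqref{eq:nd2up} in the proof of Lemma~\ref{lem:normestimates} explicitly uses $n/d_n^2 \leq \varepsilon_1$, and nothing in the three listed conditions \emph{formally} implies $n/d_n^2\to 0$: reading \eqref{i:r2} with $k\neq\ell$ only forces $d_n^2/n$ to be bounded away from $0$ (so that integer codegrees can be close to $d_n^2/n$), not that it diverges. If one instead reads \eqref{i:r2} with the literal index set $V_n\times V_n$ including $k=\ell$, it combined with \eqref{i:r1} would force $d_n = \Theta(n)$, which is more restrictive than the paper intends. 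So the theorem as stated does have a small gap of exactly the kind you flag; this is why the paper labels it ``Informal.'' A rigorous version should add $n/d_n^2\to 0$ (or a quantitative version thereof) as a separate hypothesis, and your proposal, with this added, is a complete proof.
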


When it comes to applying Theorem~\ref{thm:generalized} to the Erdős–Rényi random graph, it is the second condition which is the most restrictive.  Specifically, it restricts the statement of Theorem~\ref{thm:meetingtime} to sufficiently dense Erdős–Rényi random graphs ($\beta  > 0.5$). Indeed, Items \eqref{i:r1} and \eqref{i:singular-gap} hold with high probability for $p_n \gg (\log n)/n$, cf.~Lemma~\ref{lem:concentrationR} and \cite{alt2021extremal}, whereas the probability of \eqref{i:r2} is only high exactly in the regime, where the average degree grows faster than $\sqrt{n}$. We conjecture that the perturbation method can be adapted to go through under Conditions \eqref{i:r1}, \eqref{i:singular-gap} and a substantially milder condition than \eqref{i:r2}. We will explore this elsewhere.

\section{Literature}

\printbibliography[
    title={},
    heading=none
]

\appendix
\section{}
\begin{theorem}[\citet{stewart1973error}]\label{thm:Stewart-perturbation}
    Let $B, \Delta \in \mathbb{R}^{n\times n}$. Let $X = (X_1, X_2)$ be unitary with $X_1 \in \mathbb{R}^{n\times 1}$ and suppose $\mathcal{R}(X_1)$ is an invariant subspace of $B$. Let $X^tBX$ and $X^t\Delta X$ be partitioned conformally with $X$ in the forms
    \begin{equation}\label{eq:blocksA}
        X^tBX = \left[\begin{matrix}
            B_{11} & B_{12} \\
            0 & B_{22}
        \end{matrix}\right]
    \end{equation}
    and
    \begin{equation}\label{eq:blocksDelta}
        X^t\Delta X = \left[\begin{matrix}
            \Delta_{11} & \Delta_{12} \\
            \Delta_{21} & \Delta_{22}
        \end{matrix}\right]
    \end{equation}
    Let $\delta = \mathrm{sep}(B_{11}, B_{22}) - \Vert \Delta_{11} \Vert - \Vert \Delta_{22}\Vert$. If 
    \begin{equation}
        \frac{\Vert\Delta_{21}\Vert(\Vert A_{12}\Vert + \Vert \Delta_{12}\Vert)}{\delta^2} \leq \frac{1}{4},
    \end{equation}
    there exists a matrix $Q$ with $\Vert Q \Vert \leq 2\frac{\Vert \Delta_{21} \Vert}{\delta}$ such that the columns of
    \begin{equation}
        X_{1}^{'} = (X_1 + X_2Q)(I + Q^tQ)^{-1/2}
    \end{equation}
    span an invariant subspace of $B + \Delta$. Furthermore, the matrices $
    B_{11}^{'}$ and $B_{22}^{'}$ are given by
    \begin{equation}
        B_{11}^{'} = B_{11} + \Delta_{11} + (B_{12} + \Delta_{12})Q
    \end{equation}
    and
    \begin{equation}
        B_{22}^{'} = (I + QQ^t)^{1/2}(B_{22} + \Delta_{22} + Q(B_{12} + \Delta_{12}))(I + QQ^t)^{-1/2}.
    \end{equation}
    The set $\lambda(B + \Delta)$ is the disjoint union of $\lambda(B_{11}^{'})$ and $\lambda(B_{22}^{'})$.
\end{theorem}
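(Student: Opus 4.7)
The plan is to find the new invariant subspace of $B+\Delta$ by parameterizing it as the column span of $X_1+X_2Q$ for an unknown matrix $Q$, deriving a quadratic (Riccati-type) Sylvester equation for $Q$, and solving that equation via a Banach fixed-point argument on a small ball. Unitarity of $X$ then converts the column span into an orthonormal frame via the normalization $(I+Q^tQ)^{-1/2}$, from which the block entries $B_{11}'$ and $B_{22}'$ are read off by direct computation.

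\emph{Step 1 (Riccati equation and Sylvester bound).} Imposing $(B+\Delta)(X_1+X_2Q) = (X_1+X_2Q)B_{11}'$ and projecting along the $X_1/X_2$ splitting yields $B_{11}' = (B_{11}+\Delta_{11}) + (B_{12}+\Delta_{12})Q$ from the upper block and $QB_{11}' = \Delta_{21} + (B_{22}+\Delta_{22})Q$ from the lower block. Eliminating $B_{11}'$ produces
\begin{equation*}
Q(B_{11}+\Delta_{11}) - (B_{22}+\Delta_{22})Q = \Delta_{21} - Q(B_{12}+\Delta_{12})Q.
\end{equation*}
The left-hand side is the Sylvester operator $\widehat{T}Q := Q(B_{11}+\Delta_{11}) - (B_{22}+\Delta_{22})Q$. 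Writing $\widehat{T}$ as $T_{B_{11},B_{22}}$ plus a perturbation by the diagonal blocks of $\Delta$ and using sub-multiplicativity of the operator norm gives the standard bound $\mathrm{sep}(B_{11}+\Delta_{11}, B_{22}+\Delta_{22}) \geq \mathrm{sep}(B_{11}, B_{22}) - \Vert\Delta_{11}\Vert - \Vert\Delta_{22}\Vert = \delta$, so $\widehat{T}$ is invertible with $\Vert\widehat{T}^{-1}\Vert \leq 1/\delta$.

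\emph{Step 2 (Fixed point).} I would recast the Riccati equation as $Q = \Phi(Q)$ with $\Phi(Q) := \widehat{T}^{-1}[\Delta_{21} - Q(B_{12}+\Delta_{12})Q]$ and work on the closed ball $\{Q : \Vert Q\Vert \leq r\}$ with $r := 2\Vert\Delta_{21}\Vert/\delta$. The estimates $\Vert\Phi(Q)\Vert \leq \delta^{-1}(\Vert\Delta_{21}\Vert + r^2(\Vert B_{12}\Vert+\Vert\Delta_{12}\Vert))$ and $\Vert\Phi(Q_1)-\Phi(Q_2)\Vert \leq 2r\delta^{-1}(\Vert B_{12}\Vert+\Vert\Delta_{12}\Vert)\Vert Q_1-Q_2\Vert$, together with the hypothesis $4\Vert\Delta_{21}\Vert(\Vert B_{12}\Vert+\Vert\Delta_{12}\Vert)/\delta^2 \leq 1$, imply respectively that $\Phi$ maps the ball into itself and that its Lipschitz constant is at most one. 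Banach's theorem (after a slight shrinking of the radius if the hypothesis is not strict, or Schauder otherwise) yields a $Q$ with $\Vert Q\Vert \leq 2\Vert\Delta_{21}\Vert/\delta$.

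\emph{Step 3 (Assembly).} Unitarity of $X$ gives $(X_1+X_2Q)^t(X_1+X_2Q) = I + Q^tQ$, so the columns of $X_1' := (X_1+X_2Q)(I+Q^tQ)^{-1/2}$ form an orthonormal frame spanning a $(B+\Delta)$-invariant subspace. Taking $X_2' := (X_2 - X_1Q^t)(I+QQ^t)^{-1/2}$ completes $X'=(X_1', X_2')$ to a unitary matrix, and a block-by-block evaluation of $(X')^t(B+\Delta)X'$ delivers the claimed expressions for $B_{11}'$ and $B_{22}'$. Since this matrix is block upper-triangular, $\lambda(B+\Delta) = \lambda(B_{11}')\cup\lambda(B_{22}')$; disjointness of the two spectra follows from the invertibility of the Sylvester operator $T_{B_{11}',B_{22}'}$ under the same smallness assumption. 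The main obstacle is the fixed-point argument of Step 2: the radius $r = 2\Vert\Delta_{21}\Vert/\delta$ must be calibrated to be simultaneously large enough for the self-mapping property and small enough for the contraction property, and this is precisely what the factor of $4$ in the hypothesis secures.
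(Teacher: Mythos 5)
The paper does not prove this statement: it is quoted verbatim in Appendix~A from Stewart's 1973 work \cite{stewart1973error} (cf.~also Stewart--Sun \cite{StewartSun1990matrix}, Ch.~V), so there is no ``paper proof'' to compare. Your proposal reconstructs exactly that classical argument --- derive the Riccati equation by imposing invariance on $\mathrm{span}(X_1+X_2Q)$, use $\mathrm{sep}$ to invert the perturbed Sylvester operator with norm $\leq 1/\delta$, and run a fixed-point iteration on the ball of radius $2\|\Delta_{21}\|/\delta$ --- and Steps~1--2 are correct, including your remark that the hypothesis only gives Lipschitz constant $\leq 1$, so one needs Brouwer/Schauder (or a marginal shrinking of the ball) rather than the strict Banach contraction.

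Step~3, however, contains an error you have glossed over. If you genuinely carry out the block-by-block evaluation of $(X')^t(B+\Delta)X'$ with $X_1'=(X_1+X_2Q)(I+Q^tQ)^{-1/2}$ and $X_2'=(X_2-X_1Q^t)(I+QQ^t)^{-1/2}$, then writing $M_{ij}:=B_{ij}+\Delta_{ij}$ and using the Riccati relation $M_{21}=QM_{11}+QM_{12}Q-M_{22}Q$, one finds
\begin{equation*}
(X_2')^t(B+\Delta)X_2'=(I+QQ^t)^{-1/2}\bigl(M_{22}-QM_{12}\bigr)(I+QQ^t)^{1/2},
\end{equation*}
i.e.~with a \emph{minus} sign in front of $Q(B_{12}+\Delta_{12})$, not the plus sign in the statement, and with the $(I+QQ^t)^{\pm1/2}$ factors in the opposite order. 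A trace check confirms this: with the plus sign one would get $\mathrm{tr}(B_{11}')+\mathrm{tr}(B_{22}')=\mathrm{tr}(M)+2\,\mathrm{tr}(M_{12}Q)\neq\mathrm{tr}(B+\Delta)$ in general, whereas the minus sign restores $\mathrm{tr}(B+\Delta)$. (For a concrete counterexample take $n=2$, $B=\left[\begin{smallmatrix}0&1\\0&3\end{smallmatrix}\right]$, $\Delta=\left[\begin{smallmatrix}0&0\\\epsilon&0\end{smallmatrix}\right]$.) So your claim that the block evaluation ``delivers the claimed expressions'' is false; it delivers formulas that are \emph{similar to} but not identical with the ones in the statement, and exposes a sign typo in the statement's $B_{22}'$. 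This does not affect the spectral conclusion $\lambda(B+\Delta)=\lambda(B_{11}')\,\dot\cup\,\lambda(B_{22}')$, since similar matrices have the same spectrum, but it should be flagged rather than asserted as matching. Also note that already for $B_{11}'$ the diagonal block of $(X')^t(B+\Delta)X'$ equals $(I+Q^tQ)^{1/2}B_{11}'(I+Q^tQ)^{-1/2}$, not $B_{11}'$ itself; the stated $B_{11}'$ is the matrix for which $(B+\Delta)(X_1+X_2Q)=(X_1+X_2Q)B_{11}'$, which you in fact derived correctly in Step~1 --- so your Step~3 phrasing is internally inconsistent even for the first block.
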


\end{document}